\newtheorem{theorem}{Theorem}[section]
\newtheorem{lemma}[theorem]{Lemma}
\newtheorem{remark}[theorem]{Remark}
\DeclareMathOperator{\Dist}{dist}
\DeclareMathOperator{\diam}{diam}
\title{Piecewise contracting maps on the interval: Hausdorff dimension, entropy and attractors}
\author{A.E. Calderón\footnote{Escuela de Ingeniería, Facultad de Ingeniería y Empresa -- Universidad Católica Silva Henríquez, Santiago, Chile. {\tt acalderonc@ucsh.cl}} \ and E. Villar-Sepúlveda\footnote{Department of Engineering Mathematics -- University of Bristol, Bristol, England. {\tt edgardo.villar-sepulveda@bristol.ac.uk}}}
\begin{document}
\maketitle

\begin{abstract}
\noindent We consider the attractor $\Lambda$ of a piecewise contracting map $f$ defined on a compact interval. If $f$ is injective, we show that it is possible to estimate the topological entropy of $f$ (according to Bowen's formula) and the Hausdorff dimension of $\Lambda$ via the complexity associated with the orbits of the system. Specifically, we prove that both numbers are zero.\\

\noindent {\bf MSC 2022:} 37E05, 37B35, 37B10.\\
\noindent {\bf Keywords:} interval map, piecewise contraction, attractor, complexity, Hausdorff dimension.
\end{abstract}

\section{Introduction and statements}

A map $f$ on a (non-degenerate) compact interval $X\subset\mathbb R$ is called piecewise contracting interval map (PCIM), if there exists a finite collection $\mathcal Z$ of pairwise disjoint open subintervals of $X$ such that $X=\bigcup_{\scriptscriptstyle Z\in\mathcal Z}\overline{Z}$ and $f|Z:Z\to X$ is a contraction ({with respect to} Euclidean metric) for all $Z\in\mathcal Z$. It suffices to define $f$ on $X^*:=\bigcup_{\scriptscriptstyle Z\in\mathcal Z}Z$ and study the dynamics of $f$ on $\widetilde X:=\bigcap_{j=0}^\infty f^{-j}(X^*)$, the set of points for which all iterates of $f$ are well-defined; therefore, there is no pro\-blem when considering the orbit of points in $\widetilde X$. It is not difficult to see that $\widetilde X$ is dense on $X$ when $f$ is piecewise monotonic (i.e. $f|Z$ is strictly monotone for all $Z\in\mathcal Z$). 

We consider the attractor $\Lambda$ of $f$, which is defined as the asymptotic set that attracts all orbits of points in $\widetilde X$ (a formal definition of $\Lambda$ will be provided in the next section). It is known that the $\omega$-limit set of any point in $\widetilde X$ is non-empty, compact and is contained in $\Lambda$ (see Lemma 2.1 in \cite{CCG21}), but it is not necessarily invariant if it contains points of the finite set $\Delta:=X\setminus X^*$ (as usual, for every $x\in \widetilde X$ we denote by $\omega(x)$ its $\omega$-limit set). For this reason, it is convenient to work with a concept of invariance that does not depend on how $f$ is defined on $\Delta$. We say that a set $A\subset X$ is $f$-pseudo-invariant if, for every $x\in A$,
\[
    \lim_{y\to x^-}f(y)\in A\qquad\text{or}\qquad \lim_{y\to x^+}f(y)\in A.
\]
Note that if $A \subset X$ is $f$-pseudo-invariant, then $A \cap \widetilde X$ is invariant by $f$, i.e. $f(A \cap \widetilde X)\subset A \cap \widetilde X$. As we will see next, it is possible to guarantee that every pseudo-invariant set intersects $\widetilde X$ provided that the collection of one-sided limits of points in $\Delta$ --which we denote by $D$-- is contained in $\widetilde X$. 

\begin{lemma}\label{CAP.TILDE}
    Suppose that $D\subset\widetilde X$. If $A \subset X$ is a non-empty $f$-pseudo-invariant set, then $A\cap\widetilde X$ is a non-empty $f$-invariant set.
\end{lemma}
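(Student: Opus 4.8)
The plan is to treat the two assertions separately, since invariance is the easy half. It is exactly the observation already recorded before the statement: for $x\in A\cap\widetilde X$ one has $x\in X^*$, so $x$ lies in the interior of some $Z\in\mathcal Z$, where $f$ is continuous, and hence both one-sided limits of $f$ at $x$ coincide with $f(x)$. Pseudo-invariance of $A$ then forces $f(x)\in A$, while the forward-invariance of $\widetilde X$ (if $f^j(x)\in X^*$ for all $j$, then $f^{j}(f(x))=f^{j+1}(x)\in X^*$ for all $j$) gives $f(x)\in\widetilde X$; thus $f(A\cap\widetilde X)\subset A\cap\widetilde X$. The entire difficulty therefore lies in showing that $A\cap\widetilde X$ is non-empty.

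For non-emptiness I would start from an arbitrary $x_0\in A$ (which exists since $A\neq\emptyset$) and follow its forward orbit as long as it is defined. A clean dichotomy comes directly from the partition $X=X^*\sqcup\Delta$: either $f^{j}(x_0)\in X^*$ for every $j\ge 0$, or there is a minimal index $j_0\ge 0$ with $f^{j_0}(x_0)\in\Delta$. In the first case $x_0\in\bigcap_{j\ge0}f^{-j}(X^*)=\widetilde X$ by definition, so $x_0\in A\cap\widetilde X$ and we are done at once.

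In the second case the key step is to propagate membership in $A$ along the orbit up to the instant it meets $\Delta$. By minimality of $j_0$, each $f^{j}(x_0)$ with $j<j_0$ lies in $X^*$, hence in the interior of some $Z\in\mathcal Z$, where $f$ is continuous; consequently both one-sided limits of $f$ at $f^{j}(x_0)$ equal $f^{j+1}(x_0)$, and pseudo-invariance upgrades $f^{j}(x_0)\in A$ to $f^{j+1}(x_0)\in A$. An induction (with the trivial base case $j_0=0$, where $x_0\in\Delta$ already) then yields $z:=f^{j_0}(x_0)\in A\cap\Delta$.

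It remains to escape from $\Delta$, and this is precisely where the hypothesis $D\subset\widetilde X$ enters. Since $z\in\Delta$, the one-sided limits $\lim_{y\to z^-}f(y)$ and $\lim_{y\to z^+}f(y)$ that are defined (on the sides where $z$ has an adjacent contraction domain, the contraction hypothesis guaranteeing their existence) are by definition elements of $D$. Applying pseudo-invariance at $z\in A$ produces at least one such limit $w$ with $w\in A$; but $w\in D\subset\widetilde X$, so $w\in A\cap\widetilde X$ and non-emptiness follows. The only point requiring care, rather than a genuine obstacle, is the bookkeeping of which one-sided limits exist at the endpoints of $X$ and at the shared endpoints of the subintervals, so that pseudo-invariance is applied to a limit that truly belongs to $D$.
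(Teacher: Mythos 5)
Your proof is correct and follows essentially the same route as the paper's: propagate membership in $A$ along the orbit (using that both one-sided limits coincide with $f(x)$ at points of $X^*$) until it either stays in $\widetilde X$ forever or first hits $\Delta$, then apply pseudo-invariance at that point of $\Delta$ together with $D\subset\widetilde X$. You merely make explicit two steps the paper leaves implicit -- the induction showing the orbit stays in $A$ up to the first hitting time of $\Delta$, and the forward-invariance argument for $A\cap\widetilde X$ stated just before the lemma -- so there is nothing to correct.
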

\begin{proof}
    If $A\subset \widetilde X$, the result follows. Otherwise, let $y\in A\setminus\widetilde X\neq\emptyset$ and consider the smallest integer $t\geq0$ such that $c:=f^t(y)\in A\cap\Delta$. Since $A$ is a pseudo-invariant set, we have that at least one of the one-sided limits of $f$ at $c$ belongs to $A$. Thus, as $D\subset \widetilde X$, we deduce that $A\cap\widetilde X\neq\emptyset$.
\end{proof}

Lemma \ref{CAP.TILDE} is also useful for redefining other concepts in dynamics such as mi\-ni\-ma\-li\-ty. If $D\subset\widetilde X$, we say that a compact and pseudo-invariant set $A\subset X$ is $\widetilde X$-minimal if the orbit of every point in $A\cap\widetilde X$ is dense in $A$. In fact, if $D\subset\widetilde X$, then it is also possible to completely describe the attractor of $f$:

\begin{theorem}[Calder\'on-Catsigeras-Guiraud 2021, \cite{CCG21}]\label{PRINCIPAL}
    If $f$ is piecewise monotonic and $D\subset\widetilde X$, then there exist two natural numbers {$N_1,N_2\geq0$ such that $N_1+N_2\geq1$} and the attractor $\Lambda$ of $f$ can be decomposed as follows:
    \begin{equation}
        \Lambda=\left(\bigcup\limits_{i=1}^{N_1}\mathcal O_i\right)\cup\left(\bigcup\limits_{j=1}^{N_2}K_j\right)\!,\label{DECOMP}
    \end{equation}
    where $\mathcal O_1,\mathcal O_2,\ldots,\mathcal O_{N_1}\subset\widetilde{X}$ are pairwise disjoint periodic orbits and $K_1,K_2,\ldots,K_{N_2}$ are different pseudo-invariant and $\widetilde X$-minimal Cantor sets of $X$. Moreover, for any $x\in\widetilde X$, either there exists $i\in\{1,\dots,N_1\}$ such that $\omega(x)=\mathcal O_i$ or there exists $j\in\{1,\dots,N_2\}$ such that $\omega(x)=K_j$.
\end{theorem}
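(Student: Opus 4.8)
The plan is to recover the decomposition \eqref{DECOMP} from the combinatorics of itineraries, exploiting that each branch $f|Z$ is a uniform contraction. First I would fix a common rate $\lambda:=\max_{Z\in\mathcal Z}\mathrm{Lip}(f|Z)<1$ and, for every $x\in\widetilde X$, record its itinerary $\theta(x)=(\theta_0(x),\theta_1(x),\ldots)\in\mathcal Z^{\mathbb N}$, where $f^n(x)\in\theta_n(x)$. The basic mechanism is shadowing: if $x,y\in\widetilde X$ share the same itinerary up to time $n$, then $|f^n(x)-f^n(y)|\le\lambda^n|x-y|$, so points with identical infinite itineraries are forward asymptotic. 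This reduces the study of $\Lambda$ to that of the subshift $\Sigma\subset\mathcal Z^{\mathbb N}$ of admissible itineraries together with the map that sends an admissible itinerary to the limit of the corresponding shadowed orbit.

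The heart of the argument is a minimality lemma: for every $x\in\widetilde X$, the set $\omega(x)$ (already known to be non-empty, compact and contained in $\Lambda$) is pseudo-invariant and $\widetilde X$-minimal. Pseudo-invariance is where the hypothesis $D\subset\widetilde X$ enters: if $p\in\omega(x)\cap\Delta$, then one of the one-sided limits of $f$ at $p$ lies in $D\subset\widetilde X$ and is itself a limit of the orbit of $x$, hence belongs to $\omega(x)$; for $p\in\omega(x)\cap\widetilde X$, ordinary continuity of $f$ at $p$ gives $f(p)\in\omega(x)$. For minimality I would take $p\in\omega(x)\cap\widetilde X$, pick times $n_k$ with $f^{n_k}(x)\to p$, and use the contraction estimate to show that the orbit of $x$ following the instants $n_k$ shadows the orbit of $p$; this yields $\omega(p)=\omega(x)$, and since $p$ was arbitrary the orbit of every point of $\omega(x)\cap\widetilde X$ is dense in $\omega(x)$.

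With minimality in hand I would classify the minimal sets. A compact, pseudo-invariant, $\widetilde X$-minimal set $K$ has empty interior: an invariant subinterval would be contracted by a definite factor along any orbit, contradicting density of orbits in $K$. If $K$ is finite it must be a single periodic orbit contained in $\widetilde X$, since injectivity forces $f$ to act as a cyclic permutation on a finite invariant subset of $\widetilde X$; if $K$ is infinite, minimality removes isolated points and the empty-interior property makes it nowhere dense, so $K$ is a Cantor set. To bound the number of components I would anchor each non-trivial minimal set to the finite set $\Delta$: a minimal set whose orbits never approach $\Delta$ lies in a region where $f$ is a genuine contraction and reduces to one attracting fixed point, so every component beyond finitely many trivial ones must ``use'' a point of $\Delta$; as distinct components have disjoint basins, their number is bounded by the finite combinatorial data of $\mathcal Z$. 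Equivalently, one transports through the semi-conjugacy the finiteness of minimal subsets of the low-complexity subshift $\Sigma$, whose admissible words of length $n$ grow only polynomially for an injective piecewise contraction. This produces the finite lists $\mathcal O_1,\ldots,\mathcal O_{N_1}$ and $K_1,\ldots,K_{N_2}$; since each $\omega(x)$ is itself one of these minimal sets and $\Lambda$ collects precisely the $\omega$-limit sets of points of $\widetilde X$, their union equals $\Lambda$ and the final assertion follows.

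I expect the main obstacle to be the minimality lemma, specifically the shadowing argument near $\Delta$: when an orbit of $x$ passes close to a discontinuity point, an arbitrarily small perturbation of the initial point can change the itinerary, so the naive contraction estimate breaks down, and one must combine $D\subset\widetilde X$ with pseudo-invariance to guarantee that the limiting behaviour is still captured inside $\widetilde X$. A secondary difficulty is making the complexity (or discontinuity-counting) bound precise enough to conclude finiteness of the number of minimal components, rather than merely their smallness in measure.
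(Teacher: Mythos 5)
First, a framing remark: the paper does not prove Theorem \ref{PRINCIPAL} at all --- it is imported as a black box from \cite{CCG21} --- so your proposal has to be judged as a reconstruction of that paper's argument, which is built on the machinery of atoms, itineraries and their complexity, not on naive shadowing. The decisive gap in your sketch is exactly the step you flag as ``the main obstacle'': minimality of $\omega(x)$. This is not a technical wrinkle to be smoothed out at the end; it is the core content of the theorem, and your shadowing argument genuinely fails there. The Cantor basic pieces accumulate on points of $\Delta$, so for $p\in\omega(x)\cap\widetilde X$ the forward orbit of $p$ comes arbitrarily close to discontinuities; then $f^{n_k}(x)\to p$ gives no control whatsoever on how long $f^{n_k}(x)$ and $p$ share an itinerary, and the conclusion $\omega(p)=\omega(x)$ does not follow. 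A telling symptom: your argument nowhere uses the hypothesis that $f$ is \emph{piecewise monotonic}, which is a stated assumption of Theorem \ref{PRINCIPAL} and is used essentially in \cite{CCG21} (via the ordering of atoms on the line); a proof relying only on the contraction estimate would apply verbatim to piecewise contractions on compact subsets of $\mathbb R^k$, a setting in which this kind of tame structure is known to break down (cf.\ the positive-entropy example of \cite{CGMU16} mentioned in the paper's Remark 3.7).

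Two further steps are wrong or circular as written. (i) The finiteness count: the claim that a minimal set whose orbits never approach $\Delta$ ``reduces to one attracting fixed point'' is false --- a periodic orbit of period $k>1$ lies at positive distance from $\Delta$ --- and disjointness of basins bounds nothing, since an interval contains infinitely many pairwise disjoint open sets. The $\Delta$-anchoring idea can, with work, bound the number of \emph{Cantor} pieces, but it says nothing about the number of periodic pieces, which is the part your sketch leaves completely open. Your fallback, polynomial growth of admissible words, invokes injectivity: but the separation property is a hypothesis of Theorem \ref{MAIN}, \emph{not} of Theorem \ref{PRINCIPAL}, and the paper states explicitly (Section 3) that without it the complexity of a PCIM can grow exponentially, so that route is unavailable here. (ii) The final equality: you assert that ``$\Lambda$ collects precisely the $\omega$-limit sets of points of $\widetilde X$,'' but $\Lambda$ is \emph{defined} as $\bigcap_{n\geq1}\Lambda_n$ with $\Lambda_{n+1}=\overline{f(\Lambda_n\setminus\Delta)}$. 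The inclusion of every $\omega$-limit set in $\Lambda$ is the cited Lemma 2.1 of \cite{CCG21}, but the reverse inclusion --- that the nested intersection contains no ``transient'' points outside the union of the basic pieces --- is a substantive part of the theorem and is nowhere argued in your proposal.
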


To establish our main result we will require a global injectivity condition that does not depend on how $f$ is defined on $\Delta$. We say that $f$ satisfies the {\em separation property} if $f$ is piecewise monotonic and $\overline{f(Z)}\cap \overline{f(Z')}=\emptyset$ for all $Z,Z'\in\mathcal Z$ such that $Z\neq Z'$. A PCIM that satisfies the separation property is injective on $X^*$, but not necessarily on the whole set $X$ {(consider the case in which $f$ is not injective on $\Delta$)}. Also, not every injective PCIM on $X^*$ satisfies the separation property. However, every injective PCIM on $X^*$ such that its set $\Delta$ contains only jump-discontinuities satisfies it. Our main result is the following:

\begin{theorem}\label{MAIN}
    Suppose that $f$ satisfies the separation property and $D\subset\widetilde X$. Then, the Hausdorff dimension of $\Lambda$ and the topological entropy of $f|(\Lambda\cap\widetilde X)$ are equal to zero.
\end{theorem}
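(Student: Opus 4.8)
The plan is to reduce both quantities to a single combinatorial input: the \emph{complexity} $p(n)$, defined as the number of non-empty atoms of the partition $\mathcal P_n:=\bigvee_{i=0}^{n-1}f^{-i}(\mathcal Z)$ (equivalently, the number of distinct length-$n$ itineraries realised by points of $\widetilde X$). The crucial observation is that the separation property forces $p(n)$ to grow only \emph{linearly}. Indeed, the boundary points of $\mathcal P_n$ lie in $\bigcup_{i=0}^{n-1}f^{-i}(\Delta)$, and since $f$ is injective on $X^*$ every point has at most one $f$-preimage there; hence $|f^{-i}(\Delta)|\le|\Delta|$ for each $i$, and $p(n)\le|\Delta|\,n+1$. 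I would isolate this step first, as everything else is a consequence of it. It is exactly here that injectivity is indispensable: without it the preimage count could multiply by $|\mathcal Z|$ at each step and grow exponentially, which is the mechanism the separation property kills.

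For the Hausdorff dimension I would use that $\widetilde X$ is forward invariant, so that $\Lambda\subseteq\overline{f^n(\widetilde X)}$ for every $n$ (immediate from the definition of the attractor recalled in Section~2 together with $f(\widetilde X)\subseteq\widetilde X$). Writing $\lambda:=\max_{Z\in\mathcal Z}\mathrm{Lip}(f|Z)<1$, on each atom $A$ of $\mathcal P_n$ the map $f^n$ is a composition of $n$ single-branch contractions, so $\diam f^n(A)\le\lambda^n|X|$. Thus $\Lambda$ is covered by at most $p(n)\le|\Delta|\,n+1$ sets of diameter $\le\lambda^n|X|$, whence for every $s>0$
\[
    \mathcal H^s(\Lambda)\le\liminf_{n\to\infty}(|\Delta|\,n+1)\,(\lambda^n|X|)^s=0,
\]
because $\lambda^{ns}$ decays exponentially while the count grows only linearly. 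Letting $s\downarrow0$ yields $\dim_H\Lambda=0$; the finitely many periodic orbits $\mathcal O_i$ of the decomposition (\ref{DECOMP}) contribute nothing, being finite sets.

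For the entropy I would work with Bowen's definition through $(n,\varepsilon)$-spanning sets of $\Lambda\cap\widetilde X$, and let $r(n,\varepsilon)$ denote the minimal cardinality of such a set for the Bowen metric $d_n(u,v)=\max_{0\le i<n}|f^i(u)-f^i(v)|$. Fix $\varepsilon>0$ and pick $m$ with $\lambda^m|X|<\varepsilon$. Using that the sets $\overline{f(Z)}$ are pairwise disjoint, every point of $\Lambda$ carries a well-defined backward symbol, so the partition $\mathcal Q_m$ of $\Lambda$ into image cylinders of depth $m$ has atoms of diameter $\le\lambda^m|X|<\varepsilon$. Refining dynamically, each atom of $\bigvee_{i=0}^{n-1}f^{-i}(\mathcal Q_m)$ has $d_n$-diameter $<\varepsilon$, so choosing one point per atom produces an $(n,\varepsilon)$-spanning set; its cardinality is controlled, again via injectivity, by the number of admissible itineraries of length $n+m$, hence by $|\Delta|\,(n+m)+1$. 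Consequently $\limsup_n\frac1n\log r(n,\varepsilon)=0$ for every $\varepsilon$, and letting $\varepsilon\downarrow0$ gives $h_{\mathrm{top}}(f|\Lambda\cap\widetilde X)=0$.

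The main obstacle I anticipate is the entropy step rather than the dimension step: one must make precise that the linear word-count genuinely bounds the number of $(n,\varepsilon)$-spanning points on $\Lambda\cap\widetilde X$, that is, that the image-cylinder partition $\mathcal Q_m$ is simultaneously fine (small diameter, from contraction) and dynamically coherent with the forward itineraries (from the disjointness of the $\overline{f(Z)}$). If one prefers to avoid this bookkeeping, monotonicity of Bowen entropy under inclusion together with a Misiurewicz--Szlenk-type identity $h_{\mathrm{top}}(f)=\limsup_n\frac1n\log p(n)$ for piecewise monotone maps would give $h_{\mathrm{top}}(f|\Lambda\cap\widetilde X)\le h_{\mathrm{top}}(f)=0$ directly; the delicate point there is justifying such an identity for maps that are only piecewise continuous.
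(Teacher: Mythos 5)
Your proposal is correct, and it reaches both conclusions by a route that genuinely differs from the paper's in its key combinatorial input. Where the paper invokes the spectral decomposition (Theorem \ref{PRINCIPAL}) together with the eventually-affine complexity theorem (Theorem \ref{COMPLEXITY.THEOREM}, from \cite{CGM18}) and Lemma \ref{CORO.SUMA} to conclude that $\#\mathcal A_n=\sum_{j}p_{\theta_j}(n)$ grows at most affinely, you get the linear bound $\#\mathcal A_n\leq|\Delta|\,n+1$ directly: injectivity of $f$ on $X^*$ (which the separation property implies, as the paper notes) gives $|f^{-i}(\Delta)|\leq|\Delta|$, so the domain of $f^n$ has at most $n|\Delta|+1$ connected components, hence at most that many nonempty cylinders, and (by the separation property) at most that many atoms of generation $n$. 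This is more elementary and self-contained; in particular your dimension half needs neither Theorem \ref{PRINCIPAL} nor the cited complexity theorem. For the dimension you also estimate $\mathcal H^s(\Lambda)\leq(|\Delta|\,n+1)\,(\lambda^n\diam(X))^s\to0$ for each $s>0$ directly, bypassing the paper's detour through the box dimension, Lemma \ref{COMPARISON} and the log-sum inequality (Lemma \ref{LOGSUM}); both routes work, yours is shorter. Your entropy half is essentially the paper's proof of Theorem \ref{BOWEN}: refine $\epsilon$-fine atoms of generation $m$ (their $n_0$) by $n$ dynamical preimages and count linearly; the ``dynamical coherence'' you flag as the delicate point is exactly what the paper settles via Lemma \ref{ATOMS.LEMMA2} and the identity $\#\mathcal C(\mathcal A_{n_0},n)=\#\mathcal A_{n+n_0}$, after which your bound $|\Delta|(n+m)+1$ plays the role of their $n(N-1)+\#\mathcal A_{n_0}$. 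Two points to tighten in a final write-up: the covering of $\Lambda$ should be phrased via \eqref{ATTRACTOR.ATOMS}, i.e. $\Lambda\subset\bigcup_{A\in\mathcal A_n}A$ with atoms being closures of images of cylinders, rather than the looser $\Lambda\subseteq\overline{f^n(\widetilde X)}$ (the bridge is that a nonempty atom corresponds to a nonempty cylinder, since the closure of a set meets the open piece $X_i$ only if the set itself does); and in the spanning-set construction the representative of each refined atom must be chosen inside $\Lambda\cap\widetilde X$, as in the paper's Lemma \ref{WELL}, because the definition of $r_n(\epsilon,\Lambda\cap\widetilde X)$ requires $E\subset\Lambda\cap\widetilde X$. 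You are also right to distrust the fallback via a Misiurewicz--Szlenk-type identity for merely piecewise continuous maps; it is not needed, and your primary argument stands without it.
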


The concept of topological entropy in Theorem \ref{MAIN} refers to the quantity:
\[
    h_{top}\big(f|(\Lambda\cap\widetilde X)\big)=\lim_{\epsilon\to0}\limsup_{n\to\infty}\frac{\log r_n(\epsilon,\Lambda\cap\widetilde X)}{n},
\]
where $r_n(\epsilon,\Lambda\cap\widetilde X)$ denotes the smallest cardinality of every $(n,\epsilon)$-spanning set for $\Lambda\cap\widetilde X$ with respect to $f$ (see Definition 7.8 in \cite{Wa00}). Although the number $r_n(\epsilon,K)$ is defined for a compact set $K$ in the continuous context, it will be shown that $r_n(\epsilon,\Lambda\cap\widetilde X)$ is well defined for all $n\geq1$ and $\epsilon>0$. Also, note that if $D \subset \widetilde X$ and $f$ is piecewise monotonic, then Theorem \ref{PRINCIPAL} implies that $\Lambda$ is an $f$-pseudo-invariant set. Thus, using Lemma \ref{CAP.TILDE}, we deduce that $\Lambda \cap\widetilde X$ is a non-empty and $f$-invariant set.

{\begin{remark}
    The results presented in Theorem \ref{MAIN} are also proven in \cite{GN22} under the hypotheses of global injectivity and assuming that the map is strictly increasing on each continuity piece. Another difference is that the authors work with the concept of {\it singular entropy}, which was introduced in \cite{MZ92} as an alternative for calculating the entropy according to Bowen's formula for continuous piecewise monotone systems of the interval.
\end{remark}
}

\noindent{\bf Paper organization:} In Section 2 we set the notation that we will use throughout this work. Section 3 is devoted to defining the concept of complexity and its relationship with the {\em atoms} of the system, which allow us to characterize the attractor of $f$. Finally, in Section $4$ we prove Theorem \ref{MAIN}.

\section{Convenient notation}

In what follows, we always assume that $f:X\to X$ is a PCIM and that the topology on $X$ is the one induced by the Euclidean metric. Then, there exist $\lambda\in(0,1)$ and a collection of $N\geq 1$ open non-empty disjoint subintervals $X_1,X_2,\ldots,X_N$ such that $X=\bigcup_{i=1}^N\overline{X_i}$ and
\begin{eqnarray}\label{eq1}
    |f(x)-f(y)|\leq\lambda\,|x-y|\qquad\forall\,x,y\in X_i\,,\;\;\forall\,i\in\{1,2,\ldots,N\}.
\end{eqnarray}
The real number $\lambda$ is called {\em contraction rate} of $f$ and the elements of the collection $\{X_i\}_{i=1}^N$ are called {\em contraction pieces}. We will consider them to be sorted. In particular, let $c_0,c_N$ denote the extreme points of $X$ and $\Delta=\{c_1<c_2<\dots<c_{N-1}\}$ the set of the boundaries of the contraction pieces of $f$; that is,
\[
    X_1=[c_0,c_1)\,,\quad X_2=(c_1,c_2)\,,\quad \dots\;,\quad X_N=(c_{N-1},c_N].
\]
For notational convenience we assume that $X_1$ and $X_N$ are half-closed, but one may also consider the case where one or both pieces are open by adding $c_0$ and/or $c_N$ to $\Delta$. In other words, $\Delta$ must contain all the discontinuity points of the map.

As we said before, the attractor $\Lambda$ of $f$ is non-empty, compact, pseudo-invariant and $\widetilde X$-minimal provided that $f$ is piecewise monotonic and $D\subset\widetilde X$. The formal definition of the attractor $\Lambda$ is given by 
\[
    \Lambda:=\bigcap_{n\geq1}\Lambda_n\,, \;\text{ where $\,\Lambda_1=\overline{f(X\setminus\Delta)}\;\,$ and $\;\Lambda_{n+1}=\overline{f(\Lambda_n\setminus\Delta)}\;$ for all $n\geq1$.}
\]
We are interested in computing the topological entropy of $f|(\Lambda\cap\widetilde X)$ and the Hausdorff dimension of $\Lambda$. For the latter, it will be necessary to recall some concepts. If $\delta\geq0$ and $E\subset X$, a $\delta$-cover of $E$ is a countable collection of subsets of $X$ that covers $E$ and {the diameter of each of which is} smaller than or equal to $\delta$. Thus, {for any $E\subset X$, and every $s\geq0$ and $\delta>0$,} we consider the numbers
\[
    \mathcal H_\delta^s(E):=\inf\left\{\sum_{C\in\mathcal C}\big(\!\diam(C)\big)^s\ :\ \text{$\mathcal C$ is a $\delta$-cover of $E$}\right\}
\]
and
\[
\mathcal H^s(E):=\lim_{\delta\to0}\mathcal H_\delta^s(E).
\]
It is known that $\mathcal H^s_\delta$ and $\mathcal H^s$ define an outer measure and a measure on $X$, respectively. Furthermore, $\mathcal H^s$ is called $s$-dimensional Hausdorff measure on $X$. The Hausdorff dimension of a set $E\subset X$, which we denote by $\dim_{\mathcal H}(E)$, is the critical value $s\geq0$ where $\mathcal H^s(E)$ jumps from infinity to zero. This number satisfies --for example-- that $\dim_{\mathcal H}(E)=0$ for every $E\subset X$ such that $\#E\leq\aleph_0$. Moreover, if $s_0:=\dim_{\mathcal H}(E)\geq1$ is a positive integer, then $\mathcal H^{s_0}$ coincides with the Lebesgue measure on $\mathbb R^{s_0}$. If $s_0=0$, then it is known that $E$ is totally disconnected (the reciprocal is not true!). In practical terms, the Hausdorff dimension provides a general notion of the {\em size} of a set in a metric space.

Additionally, we can consider another fractal dimension that allows us to describe the ``size'' of sets. Given $E\subset X$ and $\epsilon>0$, let $\ell(E,\epsilon)$ be the smallest number of intervals with diameter at most $\epsilon$ covering $E$. Then, we can define the {\em box dimension} of $E$ to be 
\[
    \dim_B(E):=\liminf_{\epsilon\to0^+}\frac{\log \ell(E,\epsilon)}{\log(1/\epsilon)}.
\]
Here we use the lower limit to avoid problems with the convergence. Strictly speaking, this is usually called the {\em lower box dimension} and the box dimension is usually said to exist when the limit $\lim_{\epsilon \to 0^+}$ exists. Next, we {give} the existing comparison between the Hausdorff and box dimensions.

\begin{lemma}[Inequality (3.17) in \cite{F04}]\label{COMPARISON}
    For every $E\subset X$, $\dim_{\mathcal H}(E)\leq\dim_B(E)$.
\end{lemma}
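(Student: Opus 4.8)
The statement is the classical inequality between Hausdorff and lower box dimension, so the plan is to give the standard covering argument adapted to the intervals used here. The goal is to prove $\dim_{\mathcal H}(E)\leq\dim_B(E)$ by showing that for every $s>\dim_B(E)$ one has $\dim_{\mathcal H}(E)\leq s$; taking the infimum over such $s$ then yields the claim. To obtain $\dim_{\mathcal H}(E)\leq s$ it is enough, by the definition of Hausdorff dimension recalled above, to prove that $\mathcal H^s(E)<\infty$, since $\mathcal H^s(E)=\infty$ whenever $s<\dim_{\mathcal H}(E)$. The mechanism is to convert the economical covers counted by $\ell(E,\epsilon)$ into cheap $\delta$-covers for $\mathcal H^s_\delta$.

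First I would fix $s>\dim_B(E)$. Because $\dim_B(E)$ is a \emph{liminf}, there exists a sequence $\epsilon_k\to 0^+$ (with $\epsilon_k<1$) along which $\log\ell(E,\epsilon_k)/\log(1/\epsilon_k)<s$, and rearranging this (using $\log(1/\epsilon_k)>0$) gives $\ell(E,\epsilon_k)<\epsilon_k^{-s}$. For each $k$, the definition of $\ell(E,\epsilon_k)$ furnishes a cover $\mathcal C_k$ of $E$ consisting of at most $\ell(E,\epsilon_k)$ intervals, each of diameter at most $\epsilon_k$; in particular $\mathcal C_k$ is an $\epsilon_k$-cover of $E$. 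Using $\mathcal C_k$ to estimate the infimum defining $\mathcal H^s_{\epsilon_k}$, and bounding each diameter by $\epsilon_k$, I would obtain
\[
\mathcal H^s_{\epsilon_k}(E)\leq\sum_{C\in\mathcal C_k}\big(\!\diam(C)\big)^s\leq \ell(E,\epsilon_k)\,\epsilon_k^{\,s}<\epsilon_k^{-s}\,\epsilon_k^{\,s}=1.
\]

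To finish I would exploit the monotonicity of $\mathcal H^s_\delta(E)$ in $\delta$: as $\delta$ decreases the admissible covers become more restricted, so $\mathcal H^s_\delta(E)$ is non-increasing in $\delta$ and $\mathcal H^s(E)=\lim_{\delta\to 0^+}\mathcal H^s_\delta(E)$ exists as a supremum. For any fixed $\delta>0$ I would choose $k$ large enough that $\epsilon_k<\delta$, which forces $\mathcal H^s_\delta(E)\leq\mathcal H^s_{\epsilon_k}(E)<1$; letting $\delta\to 0^+$ then gives $\mathcal H^s(E)\leq 1<\infty$, whence $\dim_{\mathcal H}(E)\leq s$. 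Since $s>\dim_B(E)$ was arbitrary, the inequality $\dim_{\mathcal H}(E)\leq\dim_B(E)$ follows. The only delicate points are handling the liminf correctly (selecting the subsequence $\epsilon_k$ along which the box-counting quotient drops below $s$) and tracking the direction of monotonicity so that the passage $\delta\to 0^+$ preserves the bound; everything else is a routine manipulation of the two definitions.
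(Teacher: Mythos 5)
Your proof is correct and complete: the subsequence extraction from the $\liminf$, the conversion of the box-counting covers into $\epsilon_k$-covers for $\mathcal H^s_{\epsilon_k}$, and the monotonicity of $\delta\mapsto\mathcal H^s_\delta(E)$ are all handled in the right direction. The paper itself gives no proof of this lemma --- it is quoted directly as Inequality (3.17) from Falconer's book --- and your argument is precisely the standard one found there, so there is nothing to compare beyond noting that your write-up makes the cited result self-contained.
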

{We recall that the previous result is valid in a general metric space, after adapting the definitions accordingly.}

\section{Complexity and atoms}

Let $\mathbb N$ be the set of natural numbers starting at 0. We say that the sequence $\theta=(\theta_t)_{t\geq0}\in\{1,\ldots,N\}^{\mathbb N}$ is the {\em itinerary} of a point $x\in\widetilde X$ if, for every $t\in\mathbb N$ and $i\in\{1,\ldots,N\}$, we have that $\theta_t=i$ if and only if $f^t(x)\in X_i$. Also, the {\em complexity function} of a sequence $\theta=(\theta_t)_{t\geq0}$ is the function $p_\theta(n):=\#L_n(\theta)$ defined for every $n\geq1$, where
\[
    L_n(\theta):=\big\{\theta_t\theta_{t+1}\ldots\theta_{t+n-1}\in\{1,\ldots,N\}^n\ :\ t\geq0\big\}.
\]
Thus, $p_\theta(n)$ gives the number of different words of length $n$ contained in $\theta$. The complexity function of any sequence is a non-decreasing function of $n$. Also, if there exists $n_0\geq1$ such that $p_\theta(n_0+1)=p_\theta(n_0)$, it can be shown that $p_\theta(n)=p_\theta(n_0)$ for all $n\geq n_0$. Theorem \ref{PRINCIPAL} is the result of classifying the orbits of points in $\widetilde X$ according to the complexity associated with their itineraries. 

Note that it is possible to associate the concept of complexity with each point of $\widetilde X$ via its itinerary. It could happen that the complexity associated with orbits of a PCIM grows exponentially. However, when a PCIM satisfies the separation property, the complexity growth is at most affine for each itinerary of the system and $n$ sufficiently large. Specifically, we have the following result:

\begin{theorem}[Catsigeras-Guiraud-Meyroneinc 2018,  \cite{CGM18}]\label{COMPLEXITY.THEOREM}
    Suppose that $f$ satisfies the se\-pa\-ra\-tion property. Let $x\in\widetilde X$ and $\theta\in\{1,\ldots,N\}^{\mathbb N}$ be its itinerary, then there exist $m_0\geq1$, $\alpha\in\{0,\ldots,N-1\}$ and $\beta\in\big\{1,\ldots,1+m_0(N-1-\alpha)\big\}$ such that
    \begin{eqnarray}
        p_\theta(n)=\alpha n+\beta\qquad\forall\,n\geq m_0. \label{EQ.COMPLEXITY}
    \end{eqnarray}
\end{theorem}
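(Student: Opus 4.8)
The plan is to translate the purely combinatorial quantity $p_\theta(n)$ into a count of geometric objects — the atoms of the partition of $\widetilde X$ by itineraries — and then to exploit the separation property to control how these atoms can branch. For a finite word $u=u_0u_1\ldots u_{n-1}\in\{1,\ldots,N\}^n$ write $[u]:=\{y\in\widetilde X:\ f^t(y)\in X_{u_t}\ \text{for}\ 0\le t\le n-1\}$ for the associated length-$n$ cylinder, and let $\mathcal P_n$ be the partition of $\widetilde X$ into the non-empty cylinders $[u]$, with $a_n:=\#\mathcal P_n$. Since each $f|X_i$ is monotone, every atom is a subinterval of $X$, and a word $u$ belongs to $L_n(\theta)$ exactly when some iterate $f^t(x)$ lies in $[u]$; hence $p_\theta(n)\le a_n$ for every $n\ge 1$.

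First I would record the consequences of the separation property. Because $f|Z$ is strictly monotone and the images $\overline{f(Z)}$ are pairwise disjoint, $f$ is injective on $X^*$, so each iterate $f^n$ is injective on its domain $\widetilde X$. Passing from $\mathcal P_n$ to $\mathcal P_{n+1}$ only subdivides atoms, and a new boundary point is created precisely at a $y\in\widetilde X$ with $f^{n}(y)\in\Delta$. Injectivity of $f^{n}$ gives at most one such preimage for each of the $N-1$ points of $\Delta$, so $a_{n+1}-a_n\le N-1$; together with $a_1\le N$ this yields $a_n\le 1+(N-1)n$ and therefore $p_\theta(n)\le 1+(N-1)n$ for all $n\ge 1$. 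This single affine upper envelope already delivers the bounds claimed for $\alpha$ and $\beta$: it forces the eventual slope to satisfy $\alpha\le N-1$, and evaluating the affine formula at $n=m_0$ gives $\beta=p_\theta(m_0)-\alpha m_0\le 1+(N-1)m_0-\alpha m_0=1+m_0(N-1-\alpha)$; the lower bound $\beta\ge 1$ follows from $p_\theta\ge 1$ (the case $\alpha=0$ being an eventually periodic itinerary, and the case $\alpha\ge 1$ using the Morse–Hedlund inequality $p_\theta(n)\ge n+1$ for non-eventually-periodic words).

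The heart of the argument is to show that the first difference $s_\theta(n):=p_\theta(n+1)-p_\theta(n)$ is eventually constant, which immediately gives the affine form $p_\theta(n)=\alpha n+\beta$ with $\alpha=\lim_n s_\theta(n)$. I would use the standard identity $s_\theta(n)=\sum_{u}\big(e(u)-1\big)$, the sum running over right-special factors $u\in L_n(\theta)$ and $e(u)$ denoting the number of symbols $a$ with $ua\in L_{n+1}(\theta)$. Geometrically, $u$ is right-special exactly when the orbit of $x$ meets $[u]$ at points whose $n$-th iterates fall on both sides of some $c_i\in\Delta$; in that case the two branches are governed by the itineraries $\xi_i^-,\xi_i^+$ of the one-sided limits of $f$ at $c_i$, which lie in $D\subset\widetilde X$. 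The separation property makes the itinerary map injective, so distinct symbolic futures correspond to genuinely distinct points and the branching at $c_i$ persists while the prefixes of $\xi_i^-$ and $\xi_i^+$ of the appropriate length coincide. Since $\Delta$ is finite, each discontinuity either never resolves ($\xi_i^-$ and $\xi_i^+$ share all their symbols) and contributes permanently to $s_\theta$, or resolves at some finite time; taking $m_0$ larger than every resolution time makes $s_\theta(n)$ constant for $n\ge m_0$, equal to the number $\alpha$ of non-resolving discontinuities.

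The step I expect to be the main obstacle is this last one: rigorously matching the combinatorial right-special factors with the geometric branching at the points of $\Delta$. Two points need care. First, $p_\theta$ is the complexity of the single itinerary of $x$, not of the whole coding, so the forward orbit may realise transient words that appear before it enters a neighbourhood of $\omega(x)$; I would handle this by passing to the shift–orbit closure of $\theta$ and using Theorem~\ref{PRINCIPAL} to identify $\omega(x)$ with a periodic orbit (trivial, $\alpha=0$) or a $\widetilde X$-minimal Cantor set, absorbing the finitely many transient words into the constant $\beta$. Second, the claim that a branching persists exactly as long as the two boundary itineraries agree relies on the cylinders being correctly nested and on no spurious merging of branches, which is precisely where injectivity of the itinerary map — hence the separation property — is indispensable; without it two distinct discontinuity orbits could re-enter a common cylinder and the first difference need not stabilise.
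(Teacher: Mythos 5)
First, a point of orientation: this paper does not prove Theorem \ref{COMPLEXITY.THEOREM} at all --- it quotes it from \cite{CGM18} --- so the only fair comparison is with the original argument there, which is a long analysis of how the atoms $A_{i_1\ldots i_n}$ are cut by $\Delta$. Your outline starts the same way and the easy half is sound: by disjointness of atoms of a given generation (item 1 of Lemma \ref{ATOMS.LEMMA1}), each point of $\Delta$ can split at most one atom when passing from generation $n$ to $n+1$, giving $\#\mathcal A_{n+1}-\#\mathcal A_n\le N-1$, the envelope $p_\theta(n)\le 1+(N-1)n$, and, \emph{once eventual exact affinity is granted}, the stated ranges for $\alpha$ and $\beta$ via $\beta=p_\theta(m_0)-\alpha m_0$. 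But that conditional clause is the entire theorem, and your treatment of it is an assertion rather than a proof, as you yourself flag.

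Concretely, three things break in the core step (eventual constancy of $s_\theta(n)=p_\theta(n+1)-p_\theta(n)$). (i) ``The separation property makes the itinerary map injective'' is false at the level of points: two distinct points with identical itineraries are contracted together ($|f^n(y)-f^n(z)|\le\lambda^n|y-z|$) but need not coincide --- take $f(x)=x/2$ on $[0,1]$ with one piece, where every point has itinerary $111\ldots$; what is true, and what the argument actually needs, is the word-to-atom injectivity of item 2 of Lemma \ref{ATOMS.LEMMA1}. (ii) Your per-discontinuity bookkeeping of right-special factors is not exact: an atom $A_u$ of generation $n$ may contain several points of $\Delta$, and $e(u)-1$ equals (number of pieces visited by the orbit inside $A_u$) minus one, which does not decompose as a sum of indicators ``both sides of $c_i$ are visited'' --- if $A_u$ meets three pieces and the orbit visits only the two outer ones, then $e(u)-1=1$ while both per-$c_i$ indicators vanish; so ``each discontinuity resolves or contributes permanently'' does not by itself yield constancy of the sum, and you never rule out a resolved branching reappearing (note also that atom codes grow to the \emph{left}, $A_{ju}\subset A_u$, so persistence at $c_i$ is about two-sided accumulation of the orbit at $c_i$, not about forward agreement of prefixes of $\xi_i^\pm$ as you state). (iii) Your fix for transient words invokes Theorem \ref{PRINCIPAL} and the itineraries $\xi_i^\pm$ of the one-sided limits, but both require the hypothesis $D\subset\widetilde X$, which Theorem \ref{COMPLEXITY.THEOREM} does not assume (under the separation property alone, a one-sided limit may hit $\Delta$ after finitely many steps and have no infinite itinerary); moreover the paper notes that Theorem \ref{PRINCIPAL} is itself obtained by classifying orbits through this complexity theorem, so leaning on it here is circular in spirit. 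What you have is the affine upper bound; the equality \eqref{EQ.COMPLEXITY}, which is the substance of \cite{CGM18}, remains unproved in your sketch.
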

Under the assumptions of Theorem \ref{PRINCIPAL} it can be proved that, for every $x\in\widetilde X$, $\omega(x)$ is a periodic orbit if and only if the complexity associated to $x$ is eventually constant; that is, $\alpha=0$ (see Theorem 2.2 in \cite{CCG21}). Furthermore, $\omega(x)$ is a $\widetilde X$-minimal Cantor set if and only if the complexity associated with $x$ is eventually affine with $\alpha \neq 0$ (see Theorem 2.3 in \cite{CCG21}). One of the important tools that allowed the establishment of these results is the so-called {\em atom}.

Let $\mathcal P(X)$ be the power set of $X$. For every $i\in\{1,\ldots,N\}$ let $F_i:\mathcal P(X)\to\mathcal P(X)$ be defined by $F_i(A)=\overline{f(A\cap X_i)}$ for $A\in\mathcal P(X)$. Let $n\geq1$ and $(i_1,\ldots,i_n)\in\{1,\ldots,N\}^n$. We say that
    \[
        A_{i_1i_2\ldots i_n}:= F_{i_n}\circ F_{i_{n-1}}\circ\ldots\circ F_{i_1}(X)
    \]
is an {\em atom of generation $n$} if it is non-empty. We denote $\mathcal A_n$ the set of all atoms of generation $n$. Every atom of generation $n\geq2$ is contained in an atom of previous generation. Precisely, for all $n\geq2$ and $(i_1,\ldots,i_n)\in\{1,\ldots,N\}^n$ we have that 
\[
    A_{i_1\ldots i_n}\subset A_{i_2\ldots i_n}\subset\ldots\subset A_{i_n}.
\]
Also, each atom of any generation is a compact interval contained in $X$. Moreover, note that the attractor of $f$ can be defined in terms of atoms as
\begin{equation}
    \Lambda=\bigcap_{n\geq1}\Lambda_n\,,\quad \text{ where }\;\, \Lambda_n=\bigcup_{A\in\mathcal A_n}A\quad\forall\,n\geq1.\label{ATTRACTOR.ATOMS}
\end{equation}

We are interested in relating the code associated with an atom to the itinerary of points in {$X$}. The following result is relevant for this purpose:

\begin{lemma}[Lemma 2.3 in \cite{CGM18}]\label{ATOMS.LEMMA1}
    Suppose that $f$ satisfies the separation property. Then, for every $n\geq1$,
    \begin{enumerate}
        \item\label{ATOMS.DISJOINT} the collection of atoms of generation $n$ is pairwise disjoint. Precisely, if $A,B\in\mathcal A_n$ are such that $A\cap B\neq\emptyset$, then $A=B$;
        \item if $A_{i_1\ldots i_n},A_{j_1\ldots j_n}\in\mathcal A_n$ are such that $A_{i_1\ldots i_n}=A_{j_1\ldots j_n}$, then $(i_1,\ldots,i_n)=(j_1,\ldots,j_n)$.
    \end{enumerate}
\end{lemma}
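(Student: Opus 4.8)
The plan is to prove both assertions simultaneously by induction on $n$, using the separation property as the base case and the strict monotonicity of $f$ on each piece to propagate disjointness through the operators $F_i$. Throughout, I would exploit that each restriction $f|X_i$ is a strictly monotone contraction, hence uniformly continuous, and so extends to a continuous strictly monotone injection $\bar f_i:\overline{X_i}\to X$. Since $X_i$ is open and we always intersect with $X_i$ before taking a closure, for every $S\subseteq X_i$ one has $\overline{f(S)}=\bar f_i(\overline{S})$ (continuous image of a compact set), and consequently $F_i(A)=\overline{f(A\cap X_i)}=\bar f_i(\overline{A\cap X_i})$. This identity is what lets me pass closures through $f$ cleanly and is the technical backbone of the argument.

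For $n=1$ the atoms are $A_i=\overline{f(X_i)}$; these are non-empty (as each $X_i\neq\emptyset$) and pairwise disjoint exactly by the separation property, which settles both claims at generation $1$. For the inductive step of item~\ref{ATOMS.DISJOINT}, let $A_{i_1\ldots i_n}$ and $A_{j_1\ldots j_n}$ be atoms of generation $n$ and distinguish two cases by their last symbol. If $i_n\neq j_n$, then the inclusions $A_{i_1\ldots i_n}\subseteq A_{i_n}$ and $A_{j_1\ldots j_n}\subseteq A_{j_n}$ together with $A_{i_n}\cap A_{j_n}=\emptyset$ (separation) force the two atoms to be disjoint. If $i_n=j_n=:k$, set $P:=A_{i_1\ldots i_{n-1}}$ and $Q:=A_{j_1\ldots j_{n-1}}$, so that $A_{i_1\ldots i_n}=F_k(P)$ and $A_{j_1\ldots j_n}=F_k(Q)$; by the induction hypothesis $P$ and $Q$ are either equal (whence the two atoms coincide) or disjoint, and in the disjoint case it remains to show $F_k(P)\cap F_k(Q)=\emptyset$.

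This last implication is the heart of the proof and the step I expect to be the main obstacle, since disjointness of sets is not preserved under closure in general. Here I would use that $P,Q$ are disjoint compact intervals, so $\overline{P\cap X_k}\subseteq P$ and $\overline{Q\cap X_k}\subseteq Q$ are disjoint compact subintervals of $\overline{X_k}$ separated by a nondegenerate gap. Applying the strictly monotone continuous injection $\bar f_k$, the images $F_k(P)=\bar f_k(\overline{P\cap X_k})$ and $F_k(Q)=\bar f_k(\overline{Q\cap X_k})$ are compact intervals whose relative order is preserved (or reversed), while the nondegenerate gap between the preimages is sent to a nondegenerate interval lying strictly between them; hence the two image intervals cannot even share an endpoint, i.e. $F_k(P)\cap F_k(Q)=\emptyset$. (If $P\cap X_k$ or $Q\cap X_k$ is empty, the corresponding set is empty and is not an atom, so this situation never occurs among genuine atoms.) This closes the induction for item~\ref{ATOMS.DISJOINT}.

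The second assertion then follows from the first by peeling symbols from the right. Assuming $A_{i_1\ldots i_n}=A_{j_1\ldots j_n}\neq\emptyset$, the inclusions into the generation-$1$ atoms together with separation force $i_n=j_n=:k$, for otherwise the common atom would lie in $A_{i_n}\cap A_{j_n}=\emptyset$. Then $F_k(A_{i_1\ldots i_{n-1}})=F_k(A_{j_1\ldots j_{n-1}})$ is non-empty; were $A_{i_1\ldots i_{n-1}}$ and $A_{j_1\ldots j_{n-1}}$ disjoint, the displacement argument of the previous paragraph would make their $F_k$-images disjoint, contradicting that both equal the same non-empty set. Hence $A_{i_1\ldots i_{n-1}}=A_{j_1\ldots j_{n-1}}$, and the induction hypothesis for the second assertion at generation $n-1$ gives $(i_1,\ldots,i_{n-1})=(j_1,\ldots,j_{n-1})$, which combined with $i_n=j_n$ yields the claim.
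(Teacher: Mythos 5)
Your proof is correct. Note, however, that the paper itself does not prove this lemma at all: it is imported verbatim as Lemma 2.3 of \cite{CGM18}, so there is no internal argument to compare against; what you have done is reconstruct a self-contained proof of the cited result, and your reconstruction is sound. The induction is set up properly (the case $i_n\neq j_n$ via the nesting $A_{i_1\ldots i_n}\subseteq A_{i_n}$ and separation; the case $i_n=j_n$ via the parent atoms, which are indeed non-empty and hence genuine atoms of generation $n-1$, as you observe), and the identity $F_i(A)=\bar f_i\big(\overline{A\cap X_i}\big)$ is justified correctly. One remark: the step you single out as ``the heart of the proof'' is actually immediate and needs none of the order/gap machinery you deploy. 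Since $P$ and $Q$ are disjoint \emph{closed} sets, $\overline{P\cap X_k}\subseteq P$ and $\overline{Q\cap X_k}\subseteq Q$ are disjoint subsets of $\overline{X_k}$, and $\bar f_k$ is injective on $\overline{X_k}$ (the continuous extension of a strictly monotone map on an open interval remains strictly monotone on its closure); injective maps send disjoint sets to disjoint sets, full stop. The argument about preserved order and a nondegenerate gap being mapped strictly between the images is harmless but superfluous --- worth trimming, since presenting a triviality as the main obstacle slightly obscures where the real content lies (namely, that the separation property handles distinct last symbols and strict monotonicity handles equal ones).
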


\noindent Next, we list some basic properties of atoms. The proof of each property is straightforward and is left as an exercise to the reader. Recall that $\lambda\in(0,1)$ is the contraction rate of $f$.

\begin{lemma}\label{ATOMS.LEMMA}
    Each one of the following statements holds:
    \begin{enumerate}
        \item For all $n\geq1$,
        \[
            \max_{A\in\mathcal A_{n+1}}\diam(A)\leq \lambda \max_{A\in\mathcal A_{n}}\diam(A)\,;
        \]
        \item For all $\epsilon>0$, there exists $n_0\geq1$ such that $\diam(A)<\epsilon$ for every $A\in\mathcal A_{n}$ with $n\geq n_0\,$;
        \item For all $x\in\Lambda$, there exists a decreasing sequence $\{B_k\}_{k\geq1}$ of atoms (i.e. $B_k\supset B_{k+1}$ for all $k\geq 1$) such that $x\in B_k\in\mathcal A_k$ for all $k\geq1$ and
        \begin{equation}
            \bigcap_{k\geq1}B_k=\{x\}\,;\label{ONEPOINT}
        \end{equation}
        \item If $f$ satisfies the separation property, the sequence  $\{B_n\}_{n\geq1}$ defined in item 3 is unique for each $x\in\Lambda$.
    \end{enumerate}
\end{lemma}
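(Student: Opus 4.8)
The plan is to treat the four items in their stated order, since items 1 and 2 are purely metric, item 3 is a compactness argument, and item 4 follows from the disjointness already recorded in Lemma \ref{ATOMS.LEMMA1}. For item 1, I would write an arbitrary generation-$(n+1)$ atom as $A_{i_1\ldots i_{n+1}}=F_{i_{n+1}}(A_{i_1\ldots i_n})=\overline{f(A_{i_1\ldots i_n}\cap X_{i_{n+1}})}$ and use that $f$ is $\lambda$-Lipschitz on the single piece $X_{i_{n+1}}$ by \eqref{eq1}. Hence $\diam\big(f(A_{i_1\ldots i_n}\cap X_{i_{n+1}})\big)\le\lambda\,\diam\big(A_{i_1\ldots i_n}\cap X_{i_{n+1}}\big)\le\lambda\,\diam\big(A_{i_1\ldots i_n}\big)$, and passing to the closure leaves the diameter unchanged. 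Since $A_{i_1\ldots i_n}\in\mathcal A_n$, bounding $\diam(A_{i_1\ldots i_n})$ by $\max_{A\in\mathcal A_n}\diam(A)$ and then maximizing over the finite, non-empty family $\mathcal A_{n+1}$ gives the claim. Item 2 is then immediate by iteration: item 1 yields $\max_{A\in\mathcal A_n}\diam(A)\le\lambda^{\,n-1}\max_{A\in\mathcal A_1}\diam(A)\le\lambda^{\,n-1}\diam(X)$, which tends to $0$ because $\lambda\in(0,1)$, so any prescribed $\epsilon$ is eventually beaten.

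For item 3, fix $x\in\Lambda$ and, for each $n\ge1$, let $\mathcal B_n$ be the set of generation-$n$ atoms containing $x$; since $x\in\Lambda\subset\Lambda_n=\bigcup_{A\in\mathcal A_n}A$, each $\mathcal B_n$ is non-empty, and it is finite because $\#\mathcal A_n\le N^n$. I would organize these atoms into a rooted tree with root $X$, joining $A\in\mathcal B_n$ to $A'\in\mathcal B_{n+1}$ whenever $A'\subset A$. The crucial point is that every node at level $n+1$ has a parent: a generation-$(n+1)$ atom $A_{j_1\ldots j_{n+1}}$ containing $x$ satisfies $A_{j_1\ldots j_{n+1}}\subset A_{j_2\ldots j_{n+1}}$ by the nesting property recorded before \eqref{ATTRACTOR.ATOMS}, and $A_{j_2\ldots j_{n+1}}\in\mathcal B_n$ since it too contains $x$. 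The tree is thus connected, infinite, and finitely branching, so König's lemma produces an infinite branch $B_1\supset B_2\supset\cdots$ with $B_k\in\mathcal A_k$ and $x\in B_k$ for all $k$. Finally, each $B_k$ is a compact interval, so $\bigcap_{k\ge1}B_k$ is a non-empty compact interval whose diameter is $0$ by item 2; hence it reduces to a single point, which must be $x$ because $x$ lies in every $B_k$. This proves \eqref{ONEPOINT}.

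For item 4, under the separation property Lemma \ref{ATOMS.LEMMA1}(\ref{ATOMS.DISJOINT}) guarantees that the atoms of each generation are pairwise disjoint, so each $\mathcal B_n$ above is a singleton: there is exactly one generation-$n$ atom containing $x$, and it is forced to equal $B_n$. Thus no choice is involved in the construction and the decreasing sequence is unique.

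I expect item 3 to be the only genuine obstacle. Without the separation property the families $\mathcal B_n$ need not be singletons and the atoms of a fixed generation may overlap, so a nested sequence cannot simply be read off by intersecting. Verifying that every level-$(n+1)$ atom sits inside some level-$n$ atom containing $x$ — so that the tree has no orphan nodes — is precisely what legitimizes the König's-lemma selection; once that is established, the remaining compactness and diameter estimate are routine.
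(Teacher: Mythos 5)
Your proof is correct and takes essentially the route the paper intends: the paper leaves this lemma as an exercise, hinting precisely that existence in item 3 follows from \eqref{ATTRACTOR.ATOMS} and uniqueness in item 4 from Lemma \ref{ATOMS.LEMMA1}, and you fill in exactly those details, with items 1--2 handled by the contraction estimate \eqref{eq1}. One cosmetic remark: without the separation property a generation-$(n+1)$ atom containing $x$ may lie in several generation-$n$ atoms containing $x$, so your inclusion ``tree'' is really a graded graph; this is harmless, since either the graded form of K\"onig's lemma applies verbatim, or you can prune to a genuine tree by assigning each atom the canonical parent $A_{j_2\ldots j_{n+1}}$ obtained by dropping the first symbol of its code.
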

\noindent Note that the existence of the sequence of atoms $\{B_k\}_{k \geq 1}$ in Lemma \ref{ATOMS.LEMMA} is guaranteed by \eqref{ATTRACTOR.ATOMS}. Besides, the uniqueness in item 4 of the same Lemma follows directly from Lemma \ref{ATOMS.LEMMA1}. Finally, the relationship between atoms, orbits and itineraries is established in the following result:
\begin{lemma}[Lemmas 2.4 and 2.5 in \cite{CGM18}]\label{ATOMS.LEMMA2}
    Let $x\in\widetilde X$ and $\theta\in\{1,\ldots,N\}^{\mathbb N}$ be its itinerary, then $f^{t+n}(x)\in A_{\theta_t\theta_{t+1}\ldots\theta_{t+n-1}}$ for every $t\geq0$ and $n\geq1$. Moreover, if $f$ also satisfies the separation property and $f^{t+n}(x)\in A_{i_1i_2\ldots i_n}$ for some $t\geq0$ and $n\geq1$, then $(i_1,\ldots,i_n)=(\theta_t,\ldots,\theta_{t+n-1})$.
    \end{lemma}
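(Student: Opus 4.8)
The plan is to prove the first (inclusion) assertion by induction on the word length $n$, with the base time $t$ ranging freely, and then to deduce the second (uniqueness) assertion immediately from Lemma~\ref{ATOMS.LEMMA1} together with the first.

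For the base case $n=1$ I would simply unwind the definitions: since $\theta_t$ is, by definition, the itinerary symbol of $x$ at time $t$, we have $f^t(x)\in X_{\theta_t}$, and therefore
\[
    f^{t+1}(x)=f\big(f^t(x)\big)\in f(X_{\theta_t})\subset\overline{f(X\cap X_{\theta_t})}=F_{\theta_t}(X)=A_{\theta_t}.
\]
For the inductive step I would exploit the nesting of the defining operators, namely the identity $A_{\theta_t\ldots\theta_{t+n}}=F_{\theta_{t+n}}\big(A_{\theta_t\ldots\theta_{t+n-1}}\big)$, which is read off directly from the definition of the atoms. Assuming the statement for $n$ (at every base time), I have $f^{t+n}(x)\in A_{\theta_t\ldots\theta_{t+n-1}}$; combining this with $f^{t+n}(x)\in X_{\theta_{t+n}}$ (again from the itinerary) gives $f^{t+n}(x)\in A_{\theta_t\ldots\theta_{t+n-1}}\cap X_{\theta_{t+n}}$, whence
\[
    f^{t+n+1}(x)\in f\big(A_{\theta_t\ldots\theta_{t+n-1}}\cap X_{\theta_{t+n}}\big)\subset\overline{f\big(A_{\theta_t\ldots\theta_{t+n-1}}\cap X_{\theta_{t+n}}\big)}=A_{\theta_t\ldots\theta_{t+n}}.
\]
This closes the induction and establishes the first assertion. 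The point that legitimises every step is that $x\in\widetilde X$, so each iterate $f^t(x)$ lies in $X^*$ and avoids $\Delta$; this is precisely what guarantees that the symbols $\theta_t$ are well defined and that $f^t(x)$ really belongs to the open piece $X_{\theta_t}$ used in the intersection.

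For the second assertion I would combine the first part with the two items of Lemma~\ref{ATOMS.LEMMA1}. By hypothesis $f^{t+n}(x)\in A_{i_1\ldots i_n}$, and by the first part also $f^{t+n}(x)\in A_{\theta_t\ldots\theta_{t+n-1}}$, so these two atoms of generation $n$ share a common point. Under the separation property, item~\ref{ATOMS.DISJOINT} of Lemma~\ref{ATOMS.LEMMA1} forces $A_{i_1\ldots i_n}=A_{\theta_t\ldots\theta_{t+n-1}}$, and then item~2 of the same lemma (equal atoms carry equal codes) yields $(i_1,\ldots,i_n)=(\theta_t,\ldots,\theta_{t+n-1})$, as desired.

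The argument carries no real analytic difficulty; the only part demanding care is the bookkeeping, namely getting the indices in the nested composition $A_{\theta_t\ldots\theta_{t+n}}=F_{\theta_{t+n}}(A_{\theta_t\ldots\theta_{t+n-1}})$ to line up correctly in the inductive step, and being explicit that it is the membership $f^{t+n}(x)\in X_{\theta_{t+n}}$ (and not merely in its closure) that allows the operator $F_{\theta_{t+n}}=\overline{f(\,\cdot\,\cap X_{\theta_{t+n}})}$ to capture the next iterate $f^{t+n+1}(x)$. Everything else is a direct reading of the definitions, with the separation property entering only through Lemma~\ref{ATOMS.LEMMA1} in the second assertion.
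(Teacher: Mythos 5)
Your proof is correct and complete. Note, however, that the paper itself offers no argument for Lemma~\ref{ATOMS.LEMMA2}: it is imported verbatim from Lemmas 2.4 and 2.5 of \cite{CGM18}, so there is no in-paper proof to compare against; your write-up simply supplies the standard argument behind that citation. The induction is sound --- the base case $f^{t+1}(x)\in f(X_{\theta_t})\subset\overline{f(X_{\theta_t})}=A_{\theta_t}$, and the step via the identity $A_{\theta_t\ldots\theta_{t+n}}=F_{\theta_{t+n}}\big(A_{\theta_t\ldots\theta_{t+n-1}}\big)$ combined with $f^{t+n}(x)\in X_{\theta_{t+n}}$, which holds precisely because $x\in\widetilde X$ keeps every iterate inside $X^*$ --- and the uniqueness assertion follows exactly as you say from the two items of Lemma~\ref{ATOMS.LEMMA1}, which is where the separation property enters.
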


\begin{remark}
    In a more general setting, atoms also allow defining the attractor of a piecewise contracting map (PCM) on a compact subset of $\mathbb R^k$, $k\geq1$. In \cite{CGMU16}, {the authors use a condition on the growth of the number of atoms of generation $n$ with respect to the contraction rate establishing, in that case, that the attractor of a piecewise contracting map has zero Hausdorff dimension}. Furthermore, an example of a PCM on $\mathbb R^3$ with positive topological entropy is exhibited in the same article. {Moreover, in \cite{G22,GN22,JO19,LN18,LN21}, the authors study the asymptotic dynamics of parametrized fa\-mi\-lies of piecewise affine contractions on the interval and circle using different approaches, without considering the concept of atom. Specifically, the authors of \cite{G22,GN22,JO19,LN18} prove that certain sets of real parameters for which those families admit non-periodic asymptotic dynamics have zero Hausdorff dimension. For the 3-parametric family studied in \cite{LN21}, the computation of the Hausdorff dimension for the set of parameters associated with non-periodic dynamics was not carried out.
    }
\end{remark}

\section{Proof of Theorem \ref{MAIN}}

We say that $C$ is a {\em basic piece} of $\Lambda$ if $C$ is an $\widetilde X$-minimal component of the attractor of $f$; that is, $C\in\{\mathcal O_1,\ldots,\mathcal O_{N_1},K_1,\ldots,K_{N_2}\}$ (see Theorem \ref{PRINCIPAL}). Also, if $C$ is a {basic piece} of $\Lambda$, we denote by $\mathcal A_n(C)$ the set of all atoms of generation $n$ that intersect $C$. Because of \eqref{ATTRACTOR.ATOMS}, for every $n\geq1$, every basic piece is contained in the union of atoms of generation $n$.

\begin{lemma}\label{KSUBSETA}
    Suppose that $f$ satisfies the separation property and $D\subset\widetilde X$. If $K$ is a non-periodic basic piece of $\Lambda$ and $\theta$ the itinerary of a point $x\in K\cap \widetilde X$. Then,
    \[
        \mathcal A_n(K)= \{A_{i_1\ldots i_n}\ :\ i_1\ldots i_n\in L_n(\theta)\}\qquad \forall\,n\geq1.
    \]
    In particular, $\#\mathcal A_n(K) = p_\theta(n)$.
\end{lemma}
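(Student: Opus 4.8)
The plan is to establish the set equality by proving both inclusions, after which the cardinality statement follows immediately. Throughout I would use that $K\cap\widetilde X$ is $f$-invariant (which holds by Lemma~\ref{CAP.TILDE}, since $D\subset\widetilde X$), so that $f^m(x)\in K$ for every $m\geq0$, together with the fact that $K$, being a non-periodic basic piece, is $\widetilde X$-minimal; hence the orbit of any point of $K\cap\widetilde X$ is dense in $K$. In particular, as $f^n(x)\in K\cap\widetilde X$, the tail $\{f^m(x):m\geq n\}$, being the orbit of $f^n(x)$, is dense in $K$.

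For the inclusion $\supseteq$, I would take a word $i_1\ldots i_n\in L_n(\theta)$, so that $\theta_t\ldots\theta_{t+n-1}=i_1\ldots i_n$ for some $t\geq0$. By Lemma~\ref{ATOMS.LEMMA2} we have $f^{t+n}(x)\in A_{i_1\ldots i_n}$; and since $f^{t+n}(x)\in K$, the atom $A_{i_1\ldots i_n}$ is non-empty and meets $K$, so it belongs to $\mathcal A_n(K)$.

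For the inclusion $\subseteq$, let $A_{j_1\ldots j_n}\in\mathcal A_n(K)$ and fix $y\in A_{j_1\ldots j_n}\cap K$. By density of the tail orbit there is a sequence $m_k\geq n$ with $f^{m_k}(x)\to y$. By Lemma~\ref{ATOMS.LEMMA2}, each $f^{m_k}(x)$ lies in the atom $A_{\theta_{m_k-n}\ldots\theta_{m_k-1}}$, whose code belongs to $L_n(\theta)$. Since there are only finitely many atoms of generation $n$, one word $w\in L_n(\theta)$ occurs infinitely often along this sequence; passing to the corresponding subsequence, all the points $f^{m_k}(x)$ lie in the single atom $A_w$. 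As atoms are compact, $y\in A_w$, whence $y\in A_w\cap A_{j_1\ldots j_n}$. By the pairwise disjointness of atoms of generation $n$ (Lemma~\ref{ATOMS.LEMMA1}(\ref{ATOMS.DISJOINT})) this forces $A_w=A_{j_1\ldots j_n}$, and then item~2 of Lemma~\ref{ATOMS.LEMMA1} yields $w=j_1\ldots j_n\in L_n(\theta)$. This proves $\subseteq$ and hence the claimed equality. Finally, the map $i_1\ldots i_n\mapsto A_{i_1\ldots i_n}$ restricted to $L_n(\theta)$ is injective by item~2 of Lemma~\ref{ATOMS.LEMMA1}, so it is a bijection onto $\mathcal A_n(K)$ and $\#\mathcal A_n(K)=\#L_n(\theta)=p_\theta(n)$.

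The main obstacle I anticipate is the inclusion $\subseteq$: a point $y\in K$ realizing the intersection need not lie in $\widetilde X$ and so has no itinerary of its own, so one cannot directly read off a code for the atom containing it. The resolution is to approximate $y$ by orbit points of $x$, which do carry well-defined atom codes via Lemma~\ref{ATOMS.LEMMA2}, and then to combine the pigeonhole principle with the compactness and pairwise disjointness of generation-$n$ atoms to force the limiting atom to be exactly $A_{j_1\ldots j_n}$.
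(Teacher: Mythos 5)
Your proof is correct and takes essentially the same route as the paper's: the easy inclusion $\supseteq$ follows from Lemma \ref{ATOMS.LEMMA2} and invariance of $K\cap\widetilde X$, while the inclusion $\subseteq$ is obtained by approximating a point of $A\cap K$ with orbit points of $x$ via $\widetilde X$-minimality, reading off their atom codes with Lemma \ref{ATOMS.LEMMA2}, and invoking the disjointness of same-generation atoms from Lemma \ref{ATOMS.LEMMA1}. The only (cosmetic) difference is the mechanism in the approximation step: the paper places a single orbit point within the minimal positive gap $\epsilon$ separating distinct atoms of generation $n$, whereas you take a sequence $f^{m_k}(x)\to y$ and combine pigeonhole over the finitely many atoms with their closedness, which is an equivalent argument.
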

\begin{proof}
    Let $n\geq1$. Consider $A\in\mathcal A_n(K)$  and let $y\in A$. By the separation property, there exists $\epsilon>0$ such that $\epsilon<\min\!\big\{\!\Dist(A_1,A_2)\ :\ A_1,A_2\in\mathcal A_n\,\text{ and }\,A_1\neq A_2\big\}$, where $\Dist$ denotes the Euclidean metric on $X$. Next, by the $\widetilde X$-minimality of $K$, there exist $m\geq n$ and $x\in K\cap\widetilde X$ such that
    \[
        |y-f^m(x)|<\min\!\big\{\!\diam(A)/2,\epsilon\big\}.
    \]
    Thus, we deduce that $f^m(x)\in A$. If $\theta$ is the itinerary of $x$, from Lemma \ref{ATOMS.LEMMA2} and item \ref{ATOMS.DISJOINT} of Lemma \ref{ATOMS.LEMMA1} we deduce that 
    \[
        f^{m}(x)=f^{(m-n)+n}(x)\in A_{\theta_{m-n}\ldots\theta_{m-1}}=A.
    \]
    Then, we conclude that $y\in A_{\theta_{m-n}\ldots\theta_{m-1}}$, where $\theta_{m-n}\ldots\theta_{m-1}\in L_n(\theta)$. Thus, we have that 
    \begin{equation}
        \mathcal A_n(K)\subset \{A_{i_1\ldots i_n}\ :\ i_1\ldots i_n\in L_n(\theta)\}. \label{CONTAIN.K}
    \end{equation}
    In addition, the reciprocal contention of \eqref{CONTAIN.K} is clearly valid.
\end{proof}

\begin{remark}\label{PERIODIC}
    Note that Lemma \ref{KSUBSETA} is also valid for periodic basic pieces. In this case, the number of atoms that contain the periodic orbit is eventually constant equal to its period, which also coincides with its complexity.
\end{remark}

\begin{lemma}\label{CORO.SUMA}
    Suppose that $f$ satisfies the separation property and $D\subset\widetilde X$. Then, there exist $k\geq1$ and $x_1,\ldots,x_k\in\Lambda\cap\widetilde X$ such that
    \[
        \Lambda\subset \bigcup_{j=1}^k\left(\bigcup_{w\in L_n(\theta_j)}A_{w}\!\right)\qquad\forall\,n\geq1,
    \]
    where $\theta_j$ is the itinerary of $x_j$ for every $j\in\{1,\ldots,k\}$. In particular, we have that $\#\mathcal A_n=p_{\theta_1}(n)+\ldots+p_{\theta_k}(n)$.
\end{lemma}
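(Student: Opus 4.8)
The plan is to reduce everything to the basic pieces furnished by Theorem \ref{PRINCIPAL} and then invoke Lemma \ref{KSUBSETA}. Writing $\Lambda=\bigcup_{i=1}^{N_1}\mathcal O_i\cup\bigcup_{j=1}^{N_2}K_j$, I set $k:=N_1+N_2$ and enumerate the basic pieces as $C_1,\ldots,C_k$. For each $C_j$ I would choose a point $x_j\in C_j\cap\widetilde X$ and let $\theta_j$ be its itinerary: for a periodic piece this is immediate since $\mathcal O_i\subset\widetilde X$, while for a Cantor piece $K_j$ the intersection $K_j\cap\widetilde X$ is non-empty by Lemma \ref{CAP.TILDE} (this is where $D\subset\widetilde X$ enters). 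In particular $x_j\in\Lambda\cap\widetilde X$, as required.

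For the inclusion I would argue piece by piece. Fix $n\geq1$ and a basic piece $C_j$. Since $C_j\subset\Lambda\subset\Lambda_n=\bigcup_{A\in\mathcal A_n}A$ by \eqref{ATTRACTOR.ATOMS}, every point of $C_j$ lies in some atom of generation $n$, and that atom then belongs to $\mathcal A_n(C_j)$; hence $C_j\subset\bigcup_{A\in\mathcal A_n(C_j)}A$. Lemma \ref{KSUBSETA} (together with Remark \ref{PERIODIC} for the periodic pieces) identifies $\mathcal A_n(C_j)$ with $\{A_w:w\in L_n(\theta_j)\}$, so $C_j\subset\bigcup_{w\in L_n(\theta_j)}A_w$. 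Taking the union over $j$ and using $\Lambda=\bigcup_{j=1}^k C_j$ yields the asserted inclusion for every $n\geq1$.

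For the cardinality count the first task is to show $\mathcal A_n=\bigcup_{j=1}^k\mathcal A_n(C_j)$, i.e. that every atom meets $\Lambda$. The separation property makes each $f|X_i$ strictly monotone, so the image of a non-degenerate subinterval is again non-degenerate; starting from $A_i=\overline{f(X_i)}$ and using that a non-degenerate atom minus the finite set $\Delta$ still meets some $X_i$ in a non-degenerate subinterval, an induction shows every atom is a non-degenerate compact interval admitting a non-empty successor of the next generation. A nested chain of such atoms, whose diameters tend to $0$ by Lemma \ref{ATOMS.LEMMA}, intersects in a single point of $\bigcap_m\Lambda_m=\Lambda$; hence each atom of generation $n$ meets $\Lambda$, and therefore some $C_j$. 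Combined with Lemma \ref{ATOMS.LEMMA1}, which makes the correspondence between admissible codes and atoms a bijection, this gives $\#\mathcal A_n(C_j)=\#L_n(\theta_j)=p_{\theta_j}(n)$ and the upper bound $\#\mathcal A_n\leq\sum_{j=1}^k p_{\theta_j}(n)$.

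The delicate point, and the main obstacle, is to upgrade this to the equality $\#\mathcal A_n=\sum_{j=1}^k p_{\theta_j}(n)$, which amounts to the collections $\mathcal A_n(C_1),\ldots,\mathcal A_n(C_k)$ being pairwise disjoint, equivalently to no atom meeting two distinct basic pieces. This is not automatic for small $n$, since a generation-$n$ atom is merely an interval and can a priori contain points of two different attracting pieces (for instance a fixed point and a point of another periodic orbit sharing the same generation-$n$ word). The basic pieces are, however, finitely many pairwise disjoint compact sets, hence separated by a positive distance $\delta>0$, while $\max_{A\in\mathcal A_n}\diam(A)\to0$ by Lemma \ref{ATOMS.LEMMA}. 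Thus once $n$ is large enough that all atoms have diameter below $\delta$, no atom can straddle two basic pieces, the $\mathcal A_n(C_j)$ become disjoint, and the equality follows; this asymptotic regime is precisely what the entropy and Hausdorff-dimension estimates of Theorem \ref{MAIN} require. I would emphasize that the inclusion and the inequality $\leq$ hold verbatim for every $n\geq1$, whereas the exact per-$n$ equality rests on this positive separation of the basic pieces.
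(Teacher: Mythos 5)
Your proof of the displayed inclusion is correct and coincides with the paper's own proof: enumerate the $k=N_1+N_2$ basic pieces given by Theorem \ref{PRINCIPAL}, pick $x_j\in C_j\cap\widetilde X$ (non-empty by Lemma \ref{CAP.TILDE}, which is indeed where $D\subset\widetilde X$ enters), and apply Lemma \ref{KSUBSETA} together with Remark \ref{PERIODIC}. Up to that point there is nothing to object to.

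The genuine gap is in your counting argument, and it sits in the step you treated as routine, not in the disjointness issue you flagged as delicate. You claim that every atom of generation $n$ contains a non-empty atom of generation $n+1$, hence meets $\Lambda$. This runs the nesting of atoms backwards: the generation-$(n+1)$ atoms contained in $A_{i_1\ldots i_n}$ are the \emph{left} extensions $A_{i_0i_1\ldots i_n}$, whereas your construction (choose $X_{i_{n+1}}$ meeting the atom in a non-degenerate interval) produces the \emph{right} extension $A_{i_1\ldots i_ni_{n+1}}$, which is non-empty but is contained in $A_{i_2\ldots i_{n+1}}$, not in $A_{i_1\ldots i_n}$. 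Left extensions need not exist. Concretely, take $X=[0,1]$, $X_1=[0,1/2)$, $X_2=(1/2,1]$, $f(x)=x/10+1/5$ on $X_1$ and $f(x)=x/10+1/20$ on $X_2$. The separation property holds ($\overline{f(X_1)}=[1/5,1/4]$ and $\overline{f(X_2)}=[1/10,3/20]$ are disjoint), both images lie in $X_1$, $D\subset\widetilde X$, and $\Lambda=\{2/9\}$ is the unique fixed point, so $k=1$ and $\sum_j p_{\theta_j}(n)=1$ for all $n$. Yet the word $2\,1\,1\cdots1$ (with $n-1$ ones) defines a non-empty atom $A_{21\cdots1}=f^{n-1}\big([1/10,3/20]\big)$ for every $n$, and it never contains $2/9$ (the fixed point has no preimage other than itself), so $\#\mathcal A_n=2$ for all $n$. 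This refutes your claim that every atom meets $\Lambda$, the bound $\#\mathcal A_n\le\sum_j p_{\theta_j}(n)$ you derived from it, and also your fallback assertion that the equality holds once atoms are smaller than the mutual distance of the basic pieces: the spurious atoms persist at \emph{every} generation.

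The comparison with the paper is instructive. The paper's proof of the ``in particular'' clause is a one-line appeal to Lemma \ref{KSUBSETA} and Remark \ref{PERIODIC}, which only yield $\#\mathcal A_n(C_j)=p_{\theta_j}(n)$ for each basic piece; it addresses neither atoms that miss $\Lambda$ nor atoms meeting two basic pieces, and the example above shows that the stated equality is in fact false in general. So your instinct that this clause is the delicate point was sound, even though your attempted repair contains the false step named above and your weakened (large-$n$) version of the claim is still false. Note that the remainder of the paper only needs the safe parts of the lemma: the proof of Theorem \ref{DIMH0} only requires $\ell(\Lambda,\epsilon_n)\le\sum_j p_{\theta_j}(n)$, which follows from the covering inclusion, and the bound $\#\mathcal A_{m+1}-\#\mathcal A_m\le N-1$ used in Theorem \ref{BOWEN} can be obtained directly from Lemma \ref{ATOMS.LEMMA1}: an atom, being an interval, meets more than one contraction piece only if it contains points of $\Delta$, and each point of $\Delta$ lies in at most one atom of a given generation.
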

\begin{proof}
    By Theorem \ref{PRINCIPAL}, there exists a finite number $k:=N_1+N_2$ of different basic pieces of $\Lambda$, which we sort by $C_1,\ldots,C_k$. For every $i\in\{1,\ldots,k\}$ we choose a point $x_i\in C_i\cap\widetilde X$. If $\theta_i$ is the itinerary of $x_i$ for every $i\in\{1,\ldots,k\}$, from Lemma \ref{KSUBSETA} and Remark \ref{PERIODIC} we obtain the result.
\end{proof}

We want to have a notion of the attractor size of a PCIM. For this purpose, we will compute the Hausdorff dimension of $\Lambda$ using the comparison given in Lemma \ref{COMPARISON}. Also, {we} will need the following elementary inequality:

\begin{lemma}[log-sum inequality, Theorem 2.3 in \cite{KOBA}]\label{LOGSUM}
    Let $k\geq1$ and $a_1,\ldots,a_k$ be non-negative real numbers, then
    \[
        \left(\sum_{i=1}^k a_i\right)\log\!\left(\frac{1}{k}\sum_{i=1}^k a_i\right)\leq \sum_{i=1}^ka_i \log a_i.
    \]
\end{lemma}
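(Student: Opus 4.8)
The plan is to derive this inequality from the convexity of the single-variable function $\phi(t):=t\log t$ on $[0,\infty)$, adopting the usual convention $\phi(0)=0$ so that $\phi$ is continuous up to the boundary. First I would record that $\phi''(t)=1/t>0$ for $t>0$, which together with continuity at the origin yields convexity of $\phi$ on the whole half-line $[0,\infty)$. This is the only analytic input; everything else is formal.

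The key step is to apply Jensen's inequality with the uniform weights $1/k$. Writing $A:=\sum_{i=1}^k a_i$ for the total mass and $m:=A/k$ for the arithmetic mean, convexity gives
\[
    \phi(m)=\phi\!\left(\frac{1}{k}\sum_{i=1}^k a_i\right)\leq \frac{1}{k}\sum_{i=1}^k \phi(a_i)=\frac{1}{k}\sum_{i=1}^k a_i\log a_i.
\]
Multiplying through by $k$ and substituting $\phi(m)=m\log m=(A/k)\log(A/k)$ produces exactly
\[
    A\log\!\left(\frac{1}{k}\sum_{i=1}^k a_i\right)\leq \sum_{i=1}^k a_i\log a_i,
\]
which, after recalling $A=\sum_{i=1}^k a_i$ and $A/k=\tfrac1k\sum_i a_i$, is the claimed inequality. (One may recognise this as the special case $b_1=\dots=b_k=1$ of the general two-sequence log-sum inequality.)

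If one prefers to avoid invoking Jensen explicitly, the same conclusion follows from the supporting-line property of a convex function: since $\phi$ is convex, $\phi(t)\geq \phi(m)+\phi'(m)(t-m)$ for every $t\geq0$. Summing this over $t=a_1,\dots,a_k$ and using $\sum_{i=1}^k(a_i-m)=A-km=0$ annihilates the linear term, leaving $\sum_{i=1}^k \phi(a_i)\geq k\,\phi(m)$, which is identical to the inequality above.

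The main (and essentially only) point requiring care is the boundary behaviour at $a_i=0$: one must fix the convention $0\log 0=0$ and confirm that convexity of $\phi$ persists on the closed half-line, so that the inequality remains valid when some or all of the $a_i$ vanish (including the degenerate case where every $a_i=0$, in which both sides equal $0$). Beyond this bookkeeping the argument is elementary, and no genuine obstacle arises.
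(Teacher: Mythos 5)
Your proof is correct. There is, however, nothing in the paper to compare it against: the lemma is imported verbatim as Theorem 2.3 of the cited reference \cite{KOBA} and the paper gives no proof of its own, so you have supplied a proof where the authors rely on a citation. Your argument is the standard one: convexity of $\phi(t)=t\log t$ on $[0,\infty)$ (with the convention $0\log 0=0$), followed by Jensen's inequality with uniform weights $1/k$, which after multiplying by $k$ is literally the stated inequality; as you note, this is the special case $b_1=\dots=b_k=1$ of the two-sequence log-sum inequality, which is how the cited reference presents it. You also handle the only genuinely delicate point, the boundary behaviour when some or all $a_i$ vanish. One small remark on your alternative supporting-line argument: the tangent-line bound $\phi(t)\geq\phi(m)+\phi'(m)(t-m)$ requires $m>0$, since $\phi'(m)=\log m+1$ blows up as $m\to 0^+$; this is harmless because $m=0$ forces every $a_i=0$, the degenerate case you already dispose of separately, but it is worth stating that the supporting-line route needs this case split whereas the Jensen route does not (Jensen applies directly to the convex function on the closed half-line).
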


The previous lemma will allow us to establish an upper bound for the box dimension and, therefore, for the Hausdorff dimension. With this idea, we can prove the following result:

\begin{theorem}\label{DIMH0}
    Suppose that $f$ satisfies the separation property and $D\subset\widetilde X$. Then, $\dim_{\mathcal H}(\Lambda)$ $=\dim_B(\Lambda)=0$.
\end{theorem}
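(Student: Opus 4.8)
The plan is to prove the stronger equality $\dim_B(\Lambda)=0$ and then transfer it to the Hausdorff dimension via Lemma~\ref{COMPARISON}, since $0\le\dim_{\mathcal H}(\Lambda)\le\dim_B(\Lambda)$ forces both to vanish once the box dimension does. As the box dimension of a non-empty set is non-negative, it suffices to exhibit covers of $\Lambda$ whose logarithmic counting ratio tends to $0$. The whole argument rests on a tension already packaged in the cited results: the atoms of generation $n$ cover $\Lambda$, their number grows only \emph{affinely} in $n$ (Theorem~\ref{COMPLEXITY.THEOREM}), while their diameters shrink \emph{geometrically} in $n$ (item~1 of Lemma~\ref{ATOMS.LEMMA}). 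A polynomial count against an exponentially small scale pins the dimension at zero.

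Concretely, I would fix $n\ge1$ and use \eqref{ATTRACTOR.ATOMS} together with Lemma~\ref{CORO.SUMA} to write $\Lambda\subset\bigcup_{A\in\mathcal A_n}A$ with $\#\mathcal A_n=p_{\theta_1}(n)+\dots+p_{\theta_k}(n)$. By Theorem~\ref{COMPLEXITY.THEOREM} each $p_{\theta_j}(n)=\alpha_j n+\beta_j$ for $n$ large, so there are constants $a,b\ge0$ with $\#\mathcal A_n\le an+b$ for all large $n$. Iterating item~1 of Lemma~\ref{ATOMS.LEMMA} gives $\max_{A\in\mathcal A_n}\diam(A)\le\lambda^{n}\diam(X)=:\epsilon_n$, so the atoms of generation $n$ constitute an $\epsilon_n$-cover of $\Lambda$ by intervals, whence $\ell(\Lambda,\epsilon_n)\le\#\mathcal A_n\le an+b$. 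Feeding this into the definition of the box dimension along $\epsilon_n\to0$ yields
\[
\frac{\log\ell(\Lambda,\epsilon_n)}{\log(1/\epsilon_n)}\le\frac{\log(an+b)}{\,n\log(1/\lambda)-\log\diam(X)\,}\longrightarrow 0\qquad(n\to\infty),
\]
because the numerator is $O(\log n)$ while the denominator is $\sim n\log(1/\lambda)$. Since $\dim_B(\Lambda)=\liminf_{\epsilon\to0^+}\log\ell(\Lambda,\epsilon)/\log(1/\epsilon)\le\liminf_{n}\log\ell(\Lambda,\epsilon_n)/\log(1/\epsilon_n)$, this gives $\dim_B(\Lambda)\le0$, hence $\dim_B(\Lambda)=0$.

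I expect the genuine work here to be bookkeeping rather than a single hard estimate, since the crucial input—the subexponential, indeed affine, growth of the complexity—is already supplied by Theorem~\ref{COMPLEXITY.THEOREM}. The points needing care are: (i) matching an arbitrary scale $\epsilon$ to a generation $n(\epsilon)$ with $\lambda^{n}\diam(X)\le\epsilon<\lambda^{n-1}\diam(X)$, so the bound holds for \emph{every} small $\epsilon$ (this even upgrades the $\liminf$ to an honest limit and shows the box dimension exists); and (ii) confirming through Lemma~\ref{CORO.SUMA} (and Lemma~\ref{KSUBSETA} with Remark~\ref{PERIODIC}) that summing the complexities over the finitely many basic pieces genuinely accounts for every atom meeting $\Lambda$, so that $\ell(\Lambda,\epsilon_n)\le\#\mathcal A_n$ is legitimate. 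The log-sum inequality of Lemma~\ref{LOGSUM} offers a convenient route to the same affine bound when one prefers to estimate $\log\#\mathcal A_n$ directly from the per-piece complexities, though the elementary affine bound above already suffices. Once $\dim_B(\Lambda)=0$ is in hand, Lemma~\ref{COMPARISON} delivers $\dim_{\mathcal H}(\Lambda)\le\dim_B(\Lambda)=0$, completing the proof.
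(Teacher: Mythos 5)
Your proposal is correct and follows essentially the same route as the paper: cover $\Lambda$ by the atoms of generation $n$ at scale $\epsilon_n=\lambda^n\diam(X)$, bound $\ell(\Lambda,\epsilon_n)\le\#\mathcal A_n$ via Lemma~\ref{CORO.SUMA}, use the affine complexity growth of Theorem~\ref{COMPLEXITY.THEOREM} to make the counting ratio vanish, and finish with Lemma~\ref{COMPARISON}. The only (harmless, and arguably cleaner) deviation is that you bypass the log-sum inequality of Lemma~\ref{LOGSUM}, which the paper uses to split $\log\#\mathcal A_n$ into per-piece terms, by noting directly that a finite sum of eventually affine functions is eventually affine, so $\log\#\mathcal A_n=O(\log n)$.
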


\begin{proof}
    Thanks to Lemma \ref{COMPARISON}, it is enough to prove that $\dim_B(\Lambda)=0$. By \eqref{ATTRACTOR.ATOMS}, we have that
    \begin{equation}
        \Lambda\subset \bigcup_{A\in\mathcal A_n}A\qquad\forall\,n\geq1.\label{COVER.ATOMS}
    \end{equation}
    Let $n\geq1$ and $\epsilon_n:=\lambda^n\diam(X)$. From item 1 of Lemma \ref{ATOMS.LEMMA} we deduce that $\diam(A)<\epsilon_n$ for every $A\in\mathcal A_n$. Next, from \eqref{COVER.ATOMS} and Lemma \ref{CORO.SUMA}, there exist $x_1,\ldots,x_k\in\Lambda\cap\widetilde X$ such that
    \[
        \ell(\Lambda,\epsilon_n)\leq \#\mathcal A_{n}=\sum_{j=1}^k p_{\theta_j}(n),
    \]
    where $\theta_j$ is the itinerary of $x_j$ for every $j\in\{1,\ldots,k\}$. Now, applying the log-sum inequality to the non-negative numbers $k\,p_{\theta_1}(n),\ldots,k\,p_{\theta_k}(n)$, we obtain
    \[
        \log \ell(\Lambda,\epsilon_n)\leq \log\!\left(\frac{1}{k}\sum_{j=1}^k k\,p_{\theta_j}(n)\!\right)\leq \frac{\sum\limits_{j=1}^k p_{\theta_j}(n)\log \!\big(k\,p_{\theta_j}(n)\big)}{\sum\limits_{j=1}^k p_{\theta_j}(n)}\leq \sum_{j=1}^k \log \!\big(k\,p_{\theta_j}(n)\big).
    \]
    Thus, we deduce that
    \begin{equation}
        \frac{\log \ell(\Lambda,\epsilon_n)}{\log(1/\epsilon_n)}\leq \sum_{j=1}^k\frac{\log \!\big(k\,p_{\theta_j}(n)\big)}{\log(1/\epsilon_n)}=\sum_{j=1}^k\frac{\log \!\big(k\,p_{\theta_j}(n)\big)}{n\log(1/\lambda)+\log(1/\diam(X))}.
        \label{BOX.INEQ}
    \end{equation}
    By Theorem \ref{COMPLEXITY.THEOREM}, we have that $p_{\theta_j}(n)$ is at most an affine function for $n$ sufficiently large. Therefore, 
    \[
        \frac{\log\!\big(k\,p_{\theta_j}(n)\big)}{n\log(1/\lambda)+\log(1/\diam(X))}\;\stackrel{n\to\infty}\longrightarrow\; 0\qquad\forall\,j\in\{1,\ldots,k\}.
    \]
    Thus, from \eqref{BOX.INEQ} we deduce that {$\liminf_{\epsilon \to 0^+} \frac{\log \ell(\Lambda,\epsilon)}{\log(1/\epsilon)}\leq 0$}, concluding that {$\dim_B(\Lambda)=0$}. Thus, by Lemma \ref{COMPARISON}, we have that {$\dim_{\mathcal H}(\Lambda)=0$}.
\end{proof}

Theorem \ref{DIMH0} corresponds to the first part of Theorem \ref{MAIN}. Now, we are going to prove the second and final part. Recall that $\Lambda\cap\widetilde X$ is a non-empty and $f$-invariant set whenever $f$ is piecewise monotonic and $D\subset\widetilde X$. Given $n\geq1$ and $\epsilon>0$, we say that a set {$E\subset \Lambda\cap\widetilde X$} is an $(n,\epsilon)$-{\em spanning set} for $\Lambda\cap\widetilde X$ if, for every $x\in\Lambda\cap\widetilde X$, there exists $y\in E$ such that
\[
    \big|f^j(x)-f^j(y)\big|<\epsilon\qquad\forall\,j\in\{0,\ldots,n-1\}.
\]
Thus, we define $r_n(\epsilon,\Lambda\cap\widetilde X)$ as the smallest cardinality of every $(n,\epsilon)$-spanning set for $\Lambda\cap\widetilde X$ with respect to $f$. The number $r_n(\epsilon,K)$ is well defined when $K$ is a compact set over which $f$ is continuous; however, this is not the usual case for PCIMs.

\begin{lemma}\label{WELL}
    Suppose that $f$ is piecewise monotonic and $D\subset\widetilde X$. Then, for every $n\geq1$ and $\epsilon>0$, $r_n(\epsilon,\Lambda\cap\widetilde X)$ is a finite number.
\end{lemma}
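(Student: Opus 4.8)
The plan is to avoid the usual compactness argument for spanning sets — which is unavailable here because $f$ is discontinuous on $\Delta$ and $\Lambda\cap\widetilde X$ need not be compact — and instead to exploit the contraction directly. The central observation is that the Bowen distance of order $n$ between two points sharing a long enough itinerary prefix is controlled by their initial distance, so that coincidence of a \emph{finite} prefix of the itinerary replaces the continuity input that is missing for PCIMs.

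First I would record the elementary contraction estimate. Suppose $x,y\in\Lambda\cap\widetilde X$ have itineraries agreeing on their first $n$ entries, say both equal to $(\sigma_0,\ldots,\sigma_{n-1})$. Then $f^j(x)$ and $f^j(y)$ lie in the same contraction piece $X_{\sigma_j}$ for each $j\in\{0,\ldots,n-1\}$, so \eqref{eq1} yields $|f^{j+1}(x)-f^{j+1}(y)|\le\lambda\,|f^j(x)-f^j(y)|$ for $j\in\{0,\ldots,n-2\}$. By induction, $|f^j(x)-f^j(y)|\le\lambda^j\,|x-y|\le|x-y|$ for every $j\in\{0,\ldots,n-1\}$. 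Consequently, if in addition $|x-y|<\epsilon$, then $|f^j(x)-f^j(y)|<\epsilon$ for all $j\in\{0,\ldots,n-1\}$; that is, $y$ spans $x$ in the $(n,\epsilon)$ sense.

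With this in hand I would build a finite spanning set by a double finiteness. Fix a partition of $X$ into finitely many intervals of diameter smaller than $\epsilon$, and consider the assignment sending each point of $\widetilde X$ to its first $n$ itinerary symbols in $\{1,\ldots,N\}^n$, which takes at most $N^n$ values. These two finite classifications together partition $\Lambda\cap\widetilde X$ into finitely many cells; for each non-empty cell I would choose one representative in $\Lambda\cap\widetilde X$ and let $E$ be the resulting finite collection of representatives. Given an arbitrary $x\in\Lambda\cap\widetilde X$, its cell is non-empty and hence contains some $y\in E$; then $x$ and $y$ share the first $n$ itinerary symbols and lie in a common interval of diameter $<\epsilon$, so $|x-y|<\epsilon$, and the estimate above gives $|f^j(x)-f^j(y)|<\epsilon$ for all $j\in\{0,\ldots,n-1\}$. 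Thus $E$ is an $(n,\epsilon)$-spanning set and $r_n(\epsilon,\Lambda\cap\widetilde X)\le\#E<\infty$. Since $\Lambda\cap\widetilde X$ is non-empty by Theorem \ref{PRINCIPAL} together with Lemma \ref{CAP.TILDE}, this establishes the claim.

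The only genuinely delicate point is the contraction estimate of the second paragraph: the reason a finite itinerary prefix suffices to control the whole $(n,\epsilon)$-comparison is precisely the forward contraction along coincident itineraries, which substitutes for the continuity and compactness normally used. Everything afterwards is bookkeeping, and it is worth noting that neither the separation property nor even piecewise monotonicity enters the core estimate; piecewise monotonicity and $D\subset\widetilde X$ are invoked only to guarantee that $\Lambda\cap\widetilde X$ is a non-empty invariant set, so that $r_n(\epsilon,\Lambda\cap\widetilde X)$ is meaningful.
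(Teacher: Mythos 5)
Your argument is correct, but it takes a genuinely different route from the paper's, so a comparison is in order. Both proofs ultimately rest on the same mechanism, which the paper leaves implicit and you state explicitly: two points that are $\epsilon$-close and share their first $n$ itinerary symbols remain $\epsilon$-close for the first $n$ iterates, by the contraction \eqref{eq1}. Where you differ is in how the finite family of ``cells'' is produced. The paper fixes $n_0\geq n$ such that $\diam(A)<\epsilon$ for every $A\in\mathcal A_{n_0}$ (item 2 of Lemma \ref{ATOMS.LEMMA}) and takes as cells the connected components of $A\setminus\big(\Delta\cup f^{-1}(\Delta)\cup\cdots\cup f^{-n+1}(\Delta)\big)$ with $A\in\mathcal A_{n_0}$: each component automatically has diameter less than $\epsilon$ and a constant itinerary prefix of length $n$, and piecewise monotonicity is what makes this collection finite (each $f^{-j}(\Delta)$ is then a finite set). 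You instead intersect an arbitrary finite $\epsilon$-partition of $X$ with the at most $N^n$ cylinder sets determined by itinerary prefixes, so finiteness is automatic. Your construction buys economy of hypotheses: as you note, neither the atoms nor piecewise monotonicity is needed for the finiteness claim itself (monotonicity enters only via Theorem \ref{PRINCIPAL} and Lemma \ref{CAP.TILDE} to know $\Lambda\cap\widetilde X\neq\emptyset$, and even that is dispensable, since the empty set is spanned by the empty set). What the paper's construction buys is quantitative control that is reused later: in Theorem \ref{BOWEN} the very same cover yields $r_n(\epsilon,\Lambda\cap\widetilde X)\leq\#\mathcal C(\mathcal A_{n_0},n)=\#\mathcal A_{n+n_0}\leq n(N-1)+\#\mathcal A_{n_0}$ under the separation property, a bound growing only linearly in $n$, which is exactly what forces the entropy to vanish. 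Your spanning set has cardinality of order $N^n\diam(X)/\epsilon$, exponential in $n$; that is ample for proving this lemma, but it would have to be replaced by a sharper cover --- such as the paper's --- when the entropy is actually estimated.
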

\begin{proof}
Let $n\geq1$ and $\epsilon>0$. From item 2 of Lemma \ref{ATOMS.LEMMA}, there exists $n_0\geq n$ such that
    \begin{equation}
        \diam A<\epsilon\qquad \forall\,A\in\mathcal A_{n_0}. \label{ATOM.EPSILON}
    \end{equation}
    Next, denote by $\mathcal C(\mathcal A_{n_0},n)$ the collection of all connected components of
    \[
        A\setminus\big(\Delta\cup f^{-1}(\Delta)\cup\ldots\cup f^{-n+1}(\Delta)\big)\quad\text{with $A \in \mathcal A_{n_0}$.}
    \]
{Since $f$ is piecewise monotonic, the collection $\mathcal C(\mathcal A_{n_0},n)$ is finite. Furthermore, from \eqref{ATTRACTOR.ATOMS} we can consider the subcollection $\mathcal C^*(\mathcal A_{n_0},n)\subset\mathcal C(\mathcal A_{n_0},n)$ of connected components that intersect $\Lambda\cap\widetilde X$, whose union covers said set. Thus, for every $B\in \mathcal C^*(\mathcal A_{n_0},n)$ we can take $x_B\in B\cap\Lambda\cap\widetilde X\neq\emptyset$. It is clear that $E_{n,\epsilon}:=\{x_B\ :\ B\in\mathcal C^*(\mathcal A_{n_0},n)\}\subset\Lambda\cap\widetilde X$ is a finite $(n,\epsilon)$-spanning set. This implies that $r_n(\epsilon,\Lambda\cap\widetilde X)<\infty$, concluding the proof.}
\end{proof}

Next, we establish and prove the last part of Theorem \ref{MAIN}.

\begin{theorem}\label{BOWEN}
    If $f$ satisfies the separation property and $D\subset\widetilde X$, then $h_{top}\big(f|(\Lambda\cap\widetilde X)\big)=0$.
\end{theorem}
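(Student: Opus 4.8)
The plan is to control the spanning numbers $r_n(\epsilon,\Lambda\cap\widetilde X)$ through the very spanning set already constructed in the proof of Lemma \ref{WELL}, and then to show that its cardinality grows only subexponentially in $n$. First I would fix $\epsilon>0$ and take $n_0\geq n$ as in that proof, so that $\diam(A)<\epsilon$ for every $A\in\mathcal A_{n_0}$ (item 2 of Lemma \ref{ATOMS.LEMMA}). Writing $\mathcal C^*(\mathcal A_{n_0},n)$ for the connected components of the sets $A\setminus\big(\Delta\cup f^{-1}(\Delta)\cup\ldots\cup f^{-n+1}(\Delta)\big)$, with $A\in\mathcal A_{n_0}$, that meet $\Lambda\cap\widetilde X$, the set $E_{n,\epsilon}$ is $(n,\epsilon)$-spanning, whence $r_n(\epsilon,\Lambda\cap\widetilde X)\leq\#\mathcal C^*(\mathcal A_{n_0},n)$. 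Here the shadowing holds because each such component $B$ sits inside a single atom of generation $n_0$ (so any two of its points are within $\epsilon$) and avoids the first $n-1$ preimages of $\Delta$ (so all its points share the same first $n$ itinerary symbols $w_0\ldots w_{n-1}$); an induction on $j$ using the contraction on each $X_{w_j}$ then yields $|f^j(x)-f^j(x_B)|\leq\lambda^j|x-x_B|<\epsilon$ for $0\leq j\leq n-1$. It therefore suffices to prove that $\#\mathcal C^*(\mathcal A_{n_0},n)$ grows subexponentially.

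To this end I would attach to each $B\in\mathcal C^*(\mathcal A_{n_0},n)$ the pair consisting of the atom $A\in\mathcal A_{n_0}$ with $B\subset A$ and the word $w(B)\in\{1,\ldots,N\}^n$ formed by the common first $n$ itinerary symbols of the points of $B$. The central claim is that this assignment is injective. Granting it, choosing $x_B\in B\cap\Lambda\cap\widetilde X$ with itinerary $\theta$ we have $w(B)=\theta_0\ldots\theta_{n-1}$, so Lemma \ref{ATOMS.LEMMA2} gives $f^n(x_B)\in A_{w(B)}$ and hence $A_{w(B)}\in\mathcal A_n$; since distinct words index distinct atoms (item 2 of Lemma \ref{ATOMS.LEMMA1}), the number of words that can occur is at most $\#\mathcal A_n$. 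Therefore
\[
    r_n(\epsilon,\Lambda\cap\widetilde X)\leq \#\mathcal C^*(\mathcal A_{n_0},n)\leq \#\mathcal A_{n_0}\cdot\#\mathcal A_n .
\]
Since $n_0$ depends only on $\epsilon$, the factor $\#\mathcal A_{n_0}$ is constant in $n$, while by Lemma \ref{CORO.SUMA} and Theorem \ref{COMPLEXITY.THEOREM} the quantity $\#\mathcal A_n=\sum_{j=1}^k p_{\theta_j}(n)$ is affine in $n$ for $n$ large. Hence $r_n(\epsilon,\Lambda\cap\widetilde X)=O(n)$, so $\log r_n(\epsilon,\Lambda\cap\widetilde X)/n\to0$; taking $\limsup_{n\to\infty}$ and then letting $\epsilon\to0$ gives $h_{top}\big(f|(\Lambda\cap\widetilde X)\big)=0$.

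The hard part will be the injectivity claim, and I expect it to rest squarely on piecewise monotonicity. Suppose $B,B'\subset A$ both meet $\Lambda\cap\widetilde X$ and $w(B)=w(B')=w$; picking $x\in B$ and $x'\in B'$ in $\Lambda\cap\widetilde X$, I would show that the whole segment $[x,x']\subset A$ carries the itinerary prefix $w$, so that it avoids $\Delta\cup f^{-1}(\Delta)\cup\ldots\cup f^{-n+1}(\Delta)$ and thus $B=B'$. This I would prove by induction on $j\in\{0,\ldots,n-1\}$: assuming $f^j$ is monotone and continuous on $[x,x']$ and maps it into the open piece $X_{w_j}$, the image $f^j([x,x'])$ is the closed interval between $f^j(x)$ and $f^j(x')$, which lies inside the open interval $X_{w_j}$ by convexity; composing with the monotone continuous branch of $f$ on $X_{w_j}$ propagates both properties to the index $j+1$, the base case being the convexity of $X_{w_0}$. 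The only delicate point is to rule out an interior point of $[x,x']$ whose $j$-th iterate lands exactly on $\Delta$, and this is precisely what the strict inclusion of a closed interval into an open contraction piece guarantees.
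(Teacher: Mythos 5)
Your proof is correct, and its skeleton is the paper's: both arguments use the spanning set built in Lemma \ref{WELL} from the connected components $\mathcal C^*(\mathcal A_{n_0},n)$, and both reduce the theorem to showing that the number of these components grows subexponentially in $n$, with the affine complexity bound (Theorem \ref{COMPLEXITY.THEOREM} fed through Lemma \ref{CORO.SUMA}) as the essential input. Where you genuinely differ is the counting step. The paper asserts the identity $\#\mathcal C(\mathcal A_{n_0},n)=\#\mathcal A_{n+n_0}$ and then uses the increment bound $\#\mathcal A_{m+1}-\#\mathcal A_m\leq N-1$ to get the affine bound $r_n(\epsilon,\Lambda\cap\widetilde X)\leq n(N-1)+\#\mathcal A_{n_0}$; you instead build the injection $B\mapsto(A,w(B))$ into $\mathcal A_{n_0}\times\{\text{words indexing atoms of }\mathcal A_n\}$, yielding the lossier (but still subexponential) product bound $\#\mathcal A_{n_0}\cdot\#\mathcal A_n$. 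Note that your injectivity claim --- the monotonicity/convexity induction showing that two components of the same atom of generation $n_0$ carrying the same length-$n$ word must coincide --- is precisely the substantive content hidden behind the paper's unproved identity: without it, two distinct components could share a code and the component count could exceed the atom count. So your write-up supplies a detail the paper leaves implicit, at the price of a weaker quantitative bound; you also avoid the paper's extra requirement that $\epsilon$ be smaller than the minimal gap between points of $\Delta$.

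One inconsistency to repair, though it is harmless: you first ``take $n_0\geq n$ as in Lemma \ref{WELL}'' but later assert that ``$n_0$ depends only on $\epsilon$'', so that $\#\mathcal A_{n_0}$ is constant in $n$; these cannot both hold. Either fix works. You may simply drop the requirement $n_0\geq n$, which plays no role in your argument (any $n_0$ for which \eqref{ATOM.EPSILON} holds gives the spanning property), and then $n_0=n_0(\epsilon)$ and $\#\mathcal A_{n_0}$ is indeed constant in $n$. Alternatively, keep $n_0=\max\{n,n_0(\epsilon)\}$ and observe that $\#\mathcal A_{n_0}$ is then at most affine in $n$ by Lemma \ref{CORO.SUMA} and Theorem \ref{COMPLEXITY.THEOREM}, so your bound becomes $O(n^2)$, which is still subexponential and the conclusion $h_{top}\big(f|(\Lambda\cap\widetilde X)\big)=0$ is unaffected. (The paper's own final display carries the same cosmetic issue, since its $n_0$ also dominates $n$.)
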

\begin{proof}
    Let $n\geq1$ and $\epsilon>0$ be such that 
    \[
        \epsilon<\min\!\big\{|c-c'|\ :\ c,c'\in\Delta \,\text{ such that } c\neq c' \big\}.
    \]
    Consider $n_0\geq n$ such that \eqref{ATOM.EPSILON} holds. Note that every atom of generation $n_0$ or higher has at most one element of $\Delta$. Since $f$ satisfies the {separation property we deduce, from \eqref{EQ.COMPLEXITY} and Lemma \ref{CORO.SUMA},} that  
    \[
        \#\mathcal A_{n+n_0}-\#\mathcal A_{n_0}=\sum_{m=n_0}^{n_0+n-1}\big(\#\mathcal A_{m+1}-\#\mathcal A_{m}\big)\leq n(N-1). 
    \]
    Next, noting that $\#\mathcal C(\mathcal A_{n_0},n)=\#\mathcal A_{n+n_0}$, we have the following inequalities:
    \[
        r_n(\epsilon,\Lambda\cap\widetilde X) \leq \#\mathcal C(\mathcal A_{n_0},n)\leq n(N-1)+\#\mathcal A_{n_0}.
    \]
    Thus, we obtain that
    \[
        h_{top}\big(f|(\Lambda\cap\widetilde X)\big)\leq \lim_{\epsilon\to0}\limsup_{n\to\infty}\frac{\log \big(n(N-1)+\#\mathcal A_{n_0}\big)}{n}=0,
    \]
    which concludes the proof.
\end{proof}

\begin{remark}
    From Lemma \ref{CORO.SUMA} we know that the number of atoms of generation $n\geq1$ is directly related to the complexity of the system. In fact, it can be said that the eventual affine linear growth of the complexity associated with $f$ allows us to prove {Theorem} \ref{BOWEN}. If $f$ is not injective, the relationship between topological entropy and the complexity is not clear. Still, it is common to define the topological entropy of piecewise continuous maps of the interval using the complexity associated with the orbits of the system.
\end{remark}

\noindent{\bf Acknowledgments:} A.E.C. was supported by ANID Fondecyt Iniciación N$^{\circ}$11230064, ANID Fondecyt Re\-gu\-lar N$^{\circ}$1230569, MathAmsud Project VOS 22-MATH-08 and MathAmsud Project TOMCAT 22-MATH-10. E.V-S. was supported by Ph.D. funding from ANID, Beca Chile Doctorado en el Extranjero, number 72210071. {Furthermore, we sincerely thank P. Guiraud and E. Ugalde for their va\-lua\-ble support and guidance in the initial ideas of this study. Finally, we would like to thank the anonymous referee for the careful review of this manuscript; their comments greatly improved our work.}

\end{document}